\newcommand{\tikzmark}[2]{\tikz[overlay,remember picture,baseline] \node [anchor=base] (#1) {$#2$};}
\newcommand{\DrawLine}[3][]{%
  \begin{tikzpicture}[overlay,remember picture]
    \draw[#1] (#2.north) -- (#3.south);
  \end{tikzpicture}
}
\newcommand{\DrawHLine}[3][]{%
  \begin{tikzpicture}[overlay,remember picture]
    \draw[#1] (#2.west) -- (#3.east);
  \end{tikzpicture}
}
\theoremstyle{plain} 
\newtheorem{theorem}{Theorem} 
\newtheorem{lemma}[theorem]{Lemma}
\newtheorem{example}[theorem]{Example}
\theoremstyle{definition} 
\newtheorem{definition}[theorem]{Definition}
\title{Existence of Projective Planes}
\date{\today}
\author{Xander Perrott}
\begin{document}

\maketitle

\begin{abstract}
This report gives an overview of the history of finite projective planes and their properties before going on to outline the proof that no projective plane of order 10 exists. The report also investigates the search carried out by MacWilliams, Sloane and Thompson in 1970 \cite{B15C} and confirms their result by providing independent verification that there is no vector of weight 15 in the code generated by the projective plane of order 10.
\end{abstract}

\section{Introduction}

This report introduces the concept of projective planes and gives an understanding of their properties. Section 2 details the history of research into the existence of projective planes including an overview of the search for the projective plane of order 10. This is developed further in Section 3 where we outline the method and theory used in the search for the projective plane of order 10. Finally, Section 4 details how $A_{15}$ was shown to be 0  and gives the results of an independent search for code words of weight 15.

\subsection{Properties of Projective Planes}
Projective planes were defined in order to develop the notion of a plane to include the additional property that any two distinct lines should have a unique intersection point. At this point we are thinking of a plane as a flat surface of two dimensions and a line as being a one dimensional subspace that is embedded in this plane, but we should redefine lines as sets of points contained in the plane. In a plane which is not projective two lines can be parallel and each contain a distinct set of points. They do not have a point in common at which they intersect. In a projective plane we will ensure that for every pair of distinct lines there is exactly one point that belongs to both.\\

\begin{example} The Euclidean plane is not a projective plane as we can find  pairs of parallel lines, e.g. $x=0$ and $x=1$. We can create a projective plane by starting with the Euclidean plane and adding points to make sure every distinct pair of lines has a unique intersection. Currently a set of lines with the same gradient are all parallel to each other and do not intersect so we take all the lines of a particular gradient, called a parallel class, and add the same point to each of them. Now all of the lines of the parallel class intersect at this new point. This new point does not lie on the in the plane and is called a `point at infinity'. We add a point at infinity for every parallel class so there are now an infinite number, one for every value the gradient could take and then add one additional line which contains every point at infinity and is known as the `line at infinity'. Now the plane is a projective plane as all pairs of distinct lines have a single intersection point, there are only three cases to check. \\
1. Two distinct lines which are not parallel (and neither is the line at infinity) have one intersection in the Euclidean plane as before.\\
2. Two distinct lines which are parallel meet on the line at infinity as they are in the same parallel class.\\
3. The line at infinity and any other line meet in a point on the line at infinity as every line belongs to some parallel class and therefore has one point on the line at infinity.
Note that by adding the line at infinity we have retained the important property of planes that any two points of the plane are both contained in a line of the plane.\\
\end{example}

It is natural for us to choose to extend the Euclidean plane in this way since it is the plane we are most familiar with. However, it is not particularly manageable since it has an infinite number of points and lines. In order to gain further understanding about projective planes we need to find planes with finite numbers of points and lines. By limiting the projective plane to a finite, but nontrivial size we get an interesting result; the number of lines that pass through each point is equal and furthermore is equal to the number of points that are contained in each line. This can be seen by considering a point $p$, contained in $k$ lines in a projective plane. From our requirement that the plane is nontrivial we assume there exists a quadrangle, a set of four points where no three are contained in one line. Thus we can take $l$, a line that doesn't contain $p$, and each of the $k$ lines through $p$ must intersect $l$ in exactly one point. Each of the $k$ points of intersection must be distinct as the lines already have a unique intersection at $p$ so $l$ contains at least $k$ points. If $l$ contains more than $k$ points then there is some point, $x$, on $l$ that is not on one of the $k$ lines. This is not possible as it suggests there is no line containing $p$ and $x$, a contradiction to the properties of planes. Now consider a point, $q$, also not on the line $l$, and then there must be $k$ lines containing $q$ paired with each of the $k$ lines of $l$. Any more than $k$ lines containing $q$ would suggest that the additional lines do not intersect $l$; again, a contradiction. Repeating this argument for all points and lines gives the result.

This gives us an easy way of classifying finite projective planes based on their size.\\

\begin{definition} A projective plane of order $n$ where $n\geq2$ is a finite set of points and lines (defined as sets of points), such that:
\begin{enumerate}
\item Every line contains $n+1$ points
\item Every point lies on $n+1$ lines
\item Any two distinct lines intersect in a unique point
\item Any two distinct points lie on a unique line.\\
\end{enumerate}
\end{definition}
This definition is the same under duality meaning that if we change statements into their dual statements then the definition is unchanged. A dual statement is achieved by switching words relating to lines such as `line', `intersect' and `contains' with an appropriate choice of word relating to points, `point' and `lies'. This changes  which statements are about lines and which are about points.\\

\begin{center}
\includegraphics[scale=0.5]{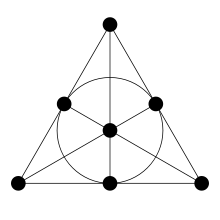}
\end{center}

\begin{example} The projective plane of order 2 (The Fano plane) is shown here. There are seven lines (six straight and one curved) and seven points with each line containing three $(2+1)$ points and each point lying on three lines. This also gives the plane a high degree of symmetry as by relabeling the points it can be seen that all the lines are equivalent in the structure. Additionally we can choose to view the points as lines of the plane and the lines as points and the diagram still represents the same structure (by the duality mentioned previously). The plane is said to be non-Euclidean as there is no way to draw the configuration in the Euclidean plane without the use of a curved line.\\
\end{example}

\begin{lemma}In a projective plane of order $n$ there are $n^2+n+1$ points and $n^2+n+1$ lines.
\end{lemma}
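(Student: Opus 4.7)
The plan is to count points by fixing a single point and partitioning the rest of the plane according to which line through the fixed point they lie on, then to deduce the count of lines either by a dual argument or by a double-counting identity.

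First I would pick an arbitrary point $p$ of the plane. By axiom 2 of the definition, $p$ lies on exactly $n+1$ distinct lines, call them $\ell_1,\dots,\ell_{n+1}$. By axiom 1 each $\ell_i$ contains exactly $n+1$ points, one of which is $p$ itself, contributing $n$ points other than $p$ on each $\ell_i$. The key observation is that every point $q\neq p$ lies on exactly one of the $\ell_i$: by axiom 4 there is a unique line through $p$ and $q$, and this line must be one of $\ell_1,\dots,\ell_{n+1}$ because those are all the lines through $p$. Moreover, no point $q\neq p$ lies on two of these lines, because then both would be the unique line through $p$ and $q$, forcing $\ell_i=\ell_j$. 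Hence the sets $\ell_i\setminus\{p\}$ partition the points other than $p$, and the total point count is
\[
1 + (n+1)\cdot n \;=\; n^{2}+n+1.
\]

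For the line count I would appeal to the self-duality of the axioms, as noted immediately after the definition in the text: swapping the roles of points and lines and of ``lies on''/``contains'' sends the four axioms to themselves, so the identical argument with a fixed line $\ell$ playing the role of $p$ shows that the number of lines is also $n^{2}+n+1$. (Alternatively, one can double-count incident point-line pairs: each of the $P$ points lies on $n+1$ lines and each of the $L$ lines contains $n+1$ points, so $(n+1)P = (n+1)L$, giving $L=P=n^{2}+n+1$.)

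I do not expect a real obstacle here; the only subtle step is verifying that the $n+1$ lines through $p$ genuinely partition the remaining points, which uses both the existence and the uniqueness clauses of axiom 4 together with axiom 2. Everything else is arithmetic.
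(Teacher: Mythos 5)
Your proposal is correct and follows essentially the same route as the paper: fix a point $p$, show the $n+1$ lines through $p$ partition the remaining points into sets of size $n$, and obtain the line count by duality. The only cosmetic difference is that you justify the no-double-counting step via the uniqueness of the line through two points (axiom 4), whereas the paper invokes the unique intersection of two lines (axiom 3); both are equivalent here.
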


\begin{proof} Consider a projective plane of order $n$ and take any point of the plane, $p$. There are $n+1$ lines passing through $p$ and each of these lines has $n$ points other than $p$ on it. This accounts for all points in the plane since for any two distinct points there must be a line containing both of them, therefore any point which is not $p$ must lie on a line that passes through $p$. Note also, if some point $x$ lies on more than one line passing through $p$ (and would therefore be counted twice in this method of counting the points) this would cause two lines through $p$ to intersect in two points, $p$ and $x$, which contradicts our definition of a projective plane. This means we have $n+1$ lines times $n$ points and the point $p$, resulting in $n(n+1)+1=n^2+n+1$ points in the projective plane of order $n$. By duality a similar argument can be made to show there are also $n^2+n+1$ lines in the projective plane of order $n$.\end{proof}

\subsection{Incidence Matrices}
Another way for us to view projective planes is as an incidence matrix. The rows of the matrix correspond to lines of the plane and the columns correspond to points in the plane so we need a matrix of size $(n^2+n+1)\times (n^2+n+1)$. The matrix tells us which points are contained in each line and which lines lie on each of the points. We define the entries of the matrix as 
$$ A_{ij}=\begin{cases} 
      1 & \textnormal{line }i\textnormal{ contains point }j \\
      0 & \textnormal{otherwise}
   \end{cases}$$
The incidence matrix needs to represent the structure of a projective plane of order $n$ so it should have have the properties:
\begin{enumerate}
\item Every row contains $n+1$ ones
\item Every column contains $n+1$ ones
\item Two distinct rows have exactly one column where they both have one as an entry
\item Two distinct columns have exactly one row where they both have one as an entry.
\end{enumerate}

The existence of such a matrix of size $(n^2+n+1)\times (n^2+n+1)$ is equivalent to the existence of a projective plane of order $n$. \\

\begin{example} Shown below are two incidence matrices for the Fano plane. There are multiple representations of any incidence matrix as there are many possible ways to label the points and lines of the plane which changes the order of rows and columns in the matrix. Naturally, the two matrices here are the same up to switching rows or columns as there is only one representation of the Fano plane excluding relabeling. Matrix A is an intuitive way of constructing the incidence matrix by going through the points sequentially and listing all lines on the point that have not yet been expressed in the matrix. Matrix B has a nicer cyclic representation, but it is not possible to find such a representation for every finite projective plane. 
$$A=\begin{pmatrix}
1 & 1 & 1 & 0 & 0 & 0 & 0\\
1 & 0 & 0 & 1 & 1 & 0 & 0\\
1 & 0 & 0 & 0 & 0 & 1 & 1\\
0 & 1 & 0 & 1 & 0 & 1 & 0\\
0 & 1 & 0 & 0 & 1 & 0 & 1\\
0 & 0 & 1 & 1 & 0 & 0 & 1\\
0 & 0 & 1 & 0 & 1 & 1 & 0
\end{pmatrix}\quad B=\begin{pmatrix}
1 & 1 & 0 & 1 & 0 & 0 & 0\\
0 & 1 & 1 & 0 & 1 & 0 & 0\\
0 & 0 & 1 & 1 & 0 & 1 & 0\\
0 & 0 & 0 & 1 & 1 & 0 & 1\\
1 & 0 & 0 & 0 & 1 & 1 & 0\\
0 & 1 & 0 & 0 & 0 & 1 & 1\\
1 & 0 & 1 & 0 & 0 & 0 & 1
\end{pmatrix}$$
\end{example}
These are incidence matrices for the Fano plane (order 2) so the number of points and lines is $n^2+n+1=4+2+1=7$ hence these are $7\times 7$ matrices. There are three ones in each row and column representing the fact that every line contains three points and every point lies on three lines of the plane. Let $u$ and $v$ be rows of the incidence matrix where $u\neq v$. Then $u\cdot v=1$ as there is only one point in common on lines $u$ and $v$. We also have $v\cdot v=3$ as all lines have three points on them so taking the inner product of two of the same rows gives three points where both lines have one as an entry. Thus we can deduce
$$AA^{T}=2I+J$$
where $J$ is the $7\times 7$ matrix with 1 for every entry, $I$ is the identity matrix and $A^T$ is the transpose of $A$. This is a direct result of the inner products we have just calculated. On the main diagonal the values are the inner product of two identical rows giving the value of three. Off the diagonal each entry is one since  we are taking the inner product of distinct rows. In general, where $A$ is the incidence matrix of the projective plane of order $n$
$$AA^{T}=nI+J.$$

Something that will be of interest to us in a later section is the determinant of $A$. Let $C=nI+J$ so that $AA^T=C$. Then we have
$$\det(C)=\det(AA^T)=\det(A)\cdot \det(A^T)=(\det(A))^2.$$
Since we have a way of relating them it is easiest to find $\det(A)$ by finding $\det(C)$ first.
$C$ is a $(n^2+n+1)\times (n^2+n+1)$ matrix with entries of $n+1$ on the diagonal and one everywhere else, therefore we can calculate the following:
$$\begin{vmatrix}
n+1& 1 & \dots & 1 \\
1 &n+1 & & \vdots \\
\vdots & & \ddots & 1 \\
1 & \dots & 1 & n+1
\end{vmatrix}
= \pm \begin{vmatrix}
n+1 & 1 & \dots & 1 \\
-n & n & & 0 \\
\vdots & & \ddots & \vdots \\
-n & 0 & \dots & n
\end{vmatrix}
= \pm \begin{vmatrix}
(n+1)^2& 1 & \dots & 1 \\
0 & n & & 0 \\
\vdots & & \ddots & \vdots \\
0 & \dots & 0 & n
\end{vmatrix}$$
The first equality comes from subtracting the first row from every other, an operation which preserves the determinant. The second equality is the result of adding every other column to the first one. Again this operation does not change the determinant. Note that the top left entry is given by $(n+1)+(n^2+n)\cdot 1=n^2+2n+1=(n+1)^2$.
Finally, the determinant can be equated by taking the product of the diagonal elements of the last matrix since it is in upper triangular form. Therefore
$$\det(C)=(n+1)^2n^{n^2+n}$$
$$\det(A)=\sqrt{\det(C)}=\pm(n+1)n^{(n^2+n)/2}.$$

\subsection{Vector Spaces}
\begin{definition}Let A be a set of vectors, $\{a_1,a_2,\ldots, a_n\}$. The vector space generated by $A$ over a field $F$ is given by
$$\{ x \mid x = f_1a_1+f_2a_2 + \cdots + f_na_n , f_i \in F \}.$$  \end{definition}
We will be working in binary so $F=\mathbb{Z}/2\mathbb{Z}=\{0,1\}$ where $1+1=0$. This means that the generated vectors are actually the sum of a subset of vectors from $A$. 
The reason we introduce vector spaces here is that we want to consider the vector space generated by the rows of the incidence matrix of a projective plane. Since the rows of the incidence matrix correspond to lines of the projective plane we are actually looking for all configurations of points which are the sum of lines of the plane. We can also call this vector subspace a code and vectors in the code are called code words. We say a point is on a configuration if the entry of the code word corresponding to the point is a one. \\

\begin{example} We can find the binary code generated by the rows of the incidence matrix for the projective plane of order 2 using matrix B above. Any vector generated is a sum of rows in B so first note that if we take the sum of the empty set of rows we get the vector (0,0,0,0,0,0,0) in the code. Next consider three lines which all pass through the same point, $p$. Adding these three rows gives (1,1,1,1,1,1,1) since every point is on this configuration of lines; $p$ is on three lines and every other point lies on a single line.
One line configurations contain three points as they are naturally the rows making up B, this gives us $$\{(1,1,0,1,0,0,0), (0,1,1,0,1,0,0), \ldots , (1,0,1,0,0,0,1)\}.$$  Two line configurations contain four points since each line contains three points, but they have one point of intersection, $p$. $p$ therefore is not on the configuration, but the other two points on each line are giving four points in total. Additionally, there is always a third line passing through $p$ and the sum of all three lines is (1,1,1,1,1,1,1) so the sum of two lines intersecting at $p$ is always the complement of the points on the third line through $p$. This gives us seven more vectors in the vector space $$\{(0,0,1,0,1,1,1), (1,0,0,1,0,1,1), \ldots ,(0,1,0,1,1,1,0)\}.$$ Any third line added to the sum of two lines will either result in (1,1,1,1,1,1,1) or another line of the plane. Thus any further addition of lines does not generate anything beyond the 16 vectors we have found so they are only vectors generated by the incidence matrix.\\
\end{example}

When we take the sum of a number of code words, a point $a$ is on the resulting code word if it was on an odd number of the summed configurations. Since each entry of the code words is summed independently if $a$ is in $k$ of the summed code words then the entry of $a$ is given by $k\cdot 1=k$. If $k$ is even then $k=0$ (mod 2) and $a$ is not on the configuration whereas if $k$ is odd then $k=1$ (mod 2) so $a$ is on the configuration. \\

\begin{definition}
The weight of a code word $v$ is the number of non-zero components of the code word. It is denoted $w(v)$.
\end{definition}
This is equivalent to the number of ones in the code word since we are only using a binary code. Therefore the weight of the code word is also the number of points in the configuration.\\

\begin{example} If $v=(1,1,0,1,0,0,0)$, $w(v)=3$. \\
\end{example}

\begin{lemma}Let $C$ be a binary code and $v,u \in C$. Then $$w(v+u)=w(v)+w(u)-2w(v\cap u).$$ \end{lemma}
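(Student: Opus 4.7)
The plan is to partition the coordinate positions according to the four possible pairs of entries $(v_i,u_i)$ and count weights by counting positions in each class. Let the code words have length $N$ (in our application $N=n^2+n+1$, but this is irrelevant). For each coordinate $i\in\{1,\dots,N\}$ the pair $(v_i,u_i)$ lies in one of the four sets $\{(0,0),(0,1),(1,0),(1,1)\}$; denote the number of coordinates in each class by $a$, $b$, $c$, $d$ respectively, so that $a+b+c+d=N$.

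Next I would read off each of the four weights from these counts. Since $w(v)$ counts the coordinates where $v_i=1$, we have $w(v)=c+d$, and similarly $w(u)=b+d$. The intersection $v\cap u$ is the componentwise product (equivalently the bitwise AND), which has a $1$ exactly in the coordinates of class $(1,1)$, so $w(v\cap u)=d$. Finally, since addition is modulo $2$, the sum $v+u$ has a $1$ exactly at coordinates of classes $(0,1)$ and $(1,0)$ (the two entries differ), giving $w(v+u)=b+c$.

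The identity then follows by a direct substitution:
$$w(v)+w(u)-2w(v\cap u)=(c+d)+(b+d)-2d=b+c=w(v+u).$$

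The argument is essentially bookkeeping, so I do not expect any real obstacle; the only thing to be careful about is the convention for $v\cap u$. Since the preceding paragraphs describe code words as characteristic vectors of configurations of points (a coordinate is $1$ iff the corresponding point lies on the configuration), the natural reading of $v\cap u$ is the characteristic vector of the set-theoretic intersection of the two configurations, which coincides with the coordinatewise product and justifies $w(v\cap u)=d$. Once that is fixed the computation above is unambiguous.
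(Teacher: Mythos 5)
Your proof is correct and is essentially the paper's argument: both classify the coordinate positions by membership in $v$ and $u$ (your four classes $a,b,c,d$ correspond to the paper's decomposition into $v\setminus u$, $u\setminus v$, $v\cap u$ and the rest), identify $v+u$ with the symmetric difference, and finish by direct counting. Your explicit remark fixing the meaning of $v\cap u$ as the coordinatewise product matches the convention the paper uses implicitly.
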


\begin{proof} Points that are in the intersection of $v$ and $u$ are on an even number of summed configurations so are not in $v+u$. Points that are on either $v$ or $u$ however are on the configuration $v+u$. Thus we have
\begin{align*} w(v+u)&=w(v\backslash u)+w(u\backslash v) \\
&= (w(v\backslash u)+w(v\cap u))+(w(u\backslash v)+w(u\cap v))-2w(v\cap u) \\
&= w(v)+w(u)-2w(v\cap u).
\end{align*}
\end{proof}

\begin{definition}The weight-enumerator polynomial of a code $C$ is given by
$$W_C(x,y)=\sum_{i=0}^{N}A_ix^{N-i}y^i$$
where $N=n^2+n+1$ and $A_i$ is the number of vectors of weight $i$ in $C$.\\
\end{definition}

\begin{example} Continuing our example of the vector space $V$ generated by the incidence matrix of the Fano plane we want to find its weight-enumerator polynomial. Previously we saw that it has one weight zero vector, seven weight three vectors corresponding to lines of the plane, seven weight four vectors representing complements of lines and one weight seven vector. Using the previous definition this gives us
$$W_V(x,y)=x^7+7x^4y^3+7x^3y^4+y^7.$$
\end{example}


\newpage
\section{History of Projective Planes}
The concept of finite projective planes has been around since the very early 1900s \cite{A32}. It was of interest to mathematicians to determine the orders for which planes could exist. The strongest proof of existence was that projective planes of order $n$ exist whenever $n$ is a prime power i.e. for any $n=p^q$, where $p$ is prime and $q$ is an integer \cite{D}.\\

This left the projective plane of order 6 as the smallest whose existence was unknown. In the 1930s, Bose came up with a condition that if a projective plane of order $n$ exists then $n-1$ orthogonal Latin squares of order $n$ must exist \cite{A4}. Euler \cite{Euler} had been interested in orthogonal Latin squares in 1782 and created the 36 officer problem requiring two orthogonal Latin squares of size 6 which he believed was unsolvable. He went on to theorise that for any $n=2$ (mod 4) orthogonal Latin squares of order $n$ do not exist, but this was only certain for $n=2$ at the time. Euler's conjecture has since been proven false, but he was correct in thinking his 36 officer problem had no solution. In 1901, Tarry hand checked all Latin squares of order 6 to find that no two were mutually orthogonal, let alone five \cite{B18}. In conjunction with Bose's result this proved the non-existance of the projective plane of order 6.\\

The Bruck-Ryser Theorem \cite{A7} confirmed this result, an improvement over enumeration which is fallible. The theorem said that for $n=1,2$ (mod 4) a projective plane of order $n$ can only exist when $n$ is the sum of two squares. As six is not the sum of two squares the projective plane of order 6 must not exist.
This result however did not say anything about the existence of the projective plane of order 10 as $10=3^2+1^2$.\\

Searching for a projective plane of order 10 is equivalent to finding a suitable $111\times 111$ incidence matrix, A, where $AA^{T}=10I+J$ ensuring the properties of projective planes hold.
The reasoning and computer search that showed no projective plane of order 10 existed began to take shape in the early 1970s. Lam \cite{A} explains that a talk given by Assmus introduced the concept of working with vector spaces which increased interest in the problem. Maybe the code space would return an easily reached contradiction proving non-existence.\\

The weight-enumerator polynomial for the vector space generated by the projective plane of order 10 was a natural construction as it gave information about the structure of the plane. MacWilliams, Sloane and Thompson \cite{B15C} explain that Assmus' article \cite{C1E} and 1970 talk, along with unpublished reasoning by Thompson, show that the weight-enumerator can be expressed in terms of $A_i$ for $i=12$, $15$ and $16$. This means that once the numbers of vectors of weight 12, 15 and 16 in the vector space are found the entire weight-enumerator polynomial is known.\\

The first of these to be calculated was $A_{15}$, shown to be zero by MacWilliams, Sloane and Thompson in an exhaustive computer search \cite{B15C}. This was shown again by Bruen and Fisher \cite{B5} using a different method based on previous work by Denniston \cite{B7}. In 1983, Lam, Thiel and Swiercz found that $A_{12}=0$ \cite{B10} and proceeded to finish the search for $A_{16}$ started by Carter to demonstrate that $A_{16}$ was also zero \cite{B12}.\\

Once $A_{12}$, $A_{15}$ and $A_{16}$ were all known to be zero the weight-enumerator polynomial could be calculated. It was seen that $A_0=0$ and $A_{11}=111$ as expected and the next smallest non-zero vector weight is $A_{19}=24,675$. This means that the incidence matrix for the projective plane of order 10 should be able to be constructed starting with a 19 point configuration. When Lam, Thiel and Swiercz ruled out all such 19 point configurations as starting vectors in the space \cite{B} it proved that no incidence matrix could exist. Therefore the projective plane of order 10 does not exist.

\newpage
\section{Specifics of the non-existence argument}

In this section we will go through a more rigorous explanation of the method that was used to show the projective plane of order 10 does not exist. We will follow the structure of K{\aa}rhstr{\"o}m \cite{D}.

\subsection{Theories of Projective Plane Codes}
Whereas the properties of projective planes discussed in the introduction applied to all orders, here we will present some more specific theories relating to the projective plane of order 10.\\

\begin{lemma}Let $x$ be a code word of the code generated by a finite projective plane of order 10. A line, $l$, of the projective plane intersects $x$ in an odd number of points if and only if $x$ is a sum of an odd number of lines of the projective plane.
\end{lemma}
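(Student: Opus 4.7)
The plan is to reduce everything to parities of inner products. First I would write the code word as $x = l_1 + l_2 + \cdots + l_k$ in the code (mod $2$), where each $l_i$ is a line of the plane (a row of the incidence matrix). For any line $l$ of the plane, the set-theoretic intersection size $|l \cap x|$ is just the integer inner product of the $0/1$ characteristic vectors of $l$ and $x$, since that inner product counts exactly the coordinates where both vectors have a $1$.

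Next I would reduce this inner product modulo $2$, giving $|l \cap x| \equiv \sum_{i=1}^{k} l \cdot l_i \pmod 2$. The crucial structural input is the intersection behaviour in the projective plane of order $10$: any two distinct lines meet in exactly one point, so $l \cdot l_i = 1$ whenever $l \neq l_i$, while $l \cdot l = n+1 = 11$. Both values are odd, so every summand contributes $1$ modulo $2$, and we obtain $|l \cap x| \equiv k \pmod 2$. Both directions of the equivalence follow at once: $|l \cap x|$ is odd if and only if the number of summands $k$ is odd, and this holds for every choice of line $l$.

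A small subtlety worth addressing is that the decomposition $x = l_1 + \cdots + l_k$ is not unique, so one should first check that the parity of $k$ is a well-defined invariant of $x$ before the statement is even meaningful. This is handled by the same computation: for any fixed line $l$, the quantity $|l \cap x| \bmod 2$ depends only on $x$, so every representation of $x$ as a sum of lines must use the same parity of summands.

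This is not really a proof with an obstacle so much as a short calculation; the only point I would flag carefully is where the hypothesis $n = 10$ actually enters. The argument relies on $n+1$ being odd, that is, on $n$ being even, so that the diagonal contribution $l \cdot l$ does not flip the parity relative to the off-diagonal contributions. For projective planes of odd order the statement would need modification, but for order $10$ both the $l \cdot l$ and $l \cdot l_i$ (for $l \neq l_i$) terms are odd, which is precisely what makes the parity count work out cleanly.
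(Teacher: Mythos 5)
Your proof is correct, but it takes a different route from the paper's. The paper argues combinatorially and in two cases: when $l$ is not one of the summands it counts the $i$ incidences of $l$ with the lines of $x$, partitions the $11$ points of $l$ according to whether an odd or even number of those lines pass through them, and matches parities; when $l$ is a summand it passes to $x-l$ and takes complements within the $11$ points of $l$. You instead linearise everything: $|l\cap x|\bmod 2$ equals the $\mathbb{F}_2$ inner product $l\cdot x=\sum_i l\cdot l_i$, and every term is $1$ because distinct lines meet in exactly one point while $l\cdot l=n+1=11$ is odd. This is essentially the observation that $AA^T=nI+J\equiv J\pmod 2$ for $n$ even, and it buys you three things the paper's argument does not make explicit: the two cases collapse into one (the diagonal term $l\cdot l$ is odd for the same reason the off-diagonal terms are, so no special treatment of $l$ being a summand is needed); the well-definedness of the parity of $k$ across different representations of $x$ falls out for free, a point the paper's statement tacitly assumes; and the precise role of the hypothesis on the order (only that $n$ is even) is isolated. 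What the paper's version buys in exchange is a concrete geometric description of which points of $l$ actually lie on $x$, which it reuses in the complement step. Both proofs are sound; yours is shorter and closer in spirit to the paper's own later argument for Lemma 17 on the dual code.
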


\begin{proof} First we consider the case where $l$ is not one of the lines in $x$. Let $x$ be the sum of $i$ lines where $i$ is an odd number. As the line, $l$, intersects each of the lines that sum to $x$ precisely once, there are $i$ intersections of $l$ with lines of $x$. Let $j$ be the number of points on $l$ where an odd number of lines of $x$ intersect and $11-j$ be the number of points where an even number intersect. Then $i=1\cdot j+0\cdot (11-j)$ (mod 2) so $j$ must be odd.
If an odd number of the lines of $x$ intersect $l$ at a point $a$ then $a$ is in $x$ since it is on an odd number of lines of $x$.
Similarly the points where an even number of lines of $x$ intersect $l$ are not in $x$. Therefore the number of points where $l$ intersects $x$ is $j$ which is odd.

Retaining the same notation, if the number of places $l$ intersects $x$ is odd then so is $j$, the number of points where an odd number of lines of $x$ intersect $l$. The equation $i=1\cdot j+0\cdot (11-j)$ tells us that $i$ is also odd so $x$ is the sum of an odd number of lines.

Now let $x$ be the sum of an odd number of lines including $l$. Then consider the configuration $x-l$ which is the sum of an even number of lines. Since $l$ is not a line of $x-l$, $l$ intersects $x-l$ in an even number of points (from previous case). Points of $l$ that are on $x-l$ lie on an odd number of lines in $x-l$, but therefore lie on an even number of lines in $x$ so are not in $x$. Similarly, points of $l$ that are not on $x-l$ are on $x$. Therefore the points of $l$ that intersect with $x$ are the complement of the points that intersect with $x-l$. As $l$ has 11 points the complement of an even number of points must be an odd number of points. Note that if $x$ is the sum of an even number of lines a similar argument shows that the number of points in the intersection of $x$ and $l$ is even. \end{proof}

\begin{lemma} The weights of code words in the code generated by the incidence matrix for the projective plane of order 10 are 0 or 3  (mod 4) for even or odd numbers of summed lines, respectively.\end{lemma}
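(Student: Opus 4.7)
My plan is to prove this by induction on $k$, the number of lines summed to form the code word, combining the weight formula $w(u+v)=w(u)+w(v)-2w(u\cap v)$ from the earlier lemma with the intersection-parity statement of the preceding lemma. The modulus-4 arithmetic then falls out because $n+1=11\equiv 3\pmod 4$ and the subtracted term $2w(x\cap l)$ contributes either $0$ or $2$ modulo $4$ depending on the parity of the intersection.

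For the base cases I take $k=0$, which gives the zero code word of weight $0\equiv 0\pmod 4$, and $k=1$, which gives a single line of weight $11\equiv 3\pmod 4$. For the inductive step, assume the statement for every code word expressible as a sum of $k$ lines. Given $y$ expressible as a sum of $k+1$ lines, fix such a representation and write $y=x+l$, where $x$ is the sum of the first $k$ of these lines. The weight formula gives
\[
w(y)=w(x)+11-2w(x\cap l),
\]
and the preceding lemma tells me that $w(x\cap l)$ has the same parity as $k$. When $k$ is even I have $w(x)\equiv 0\pmod 4$ and $2w(x\cap l)\equiv 0\pmod 4$, so $w(y)\equiv 11\equiv 3\pmod 4$. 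When $k$ is odd I have $w(x)\equiv 3\pmod 4$ and $2w(x\cap l)\equiv 2\pmod 4$, so $w(y)\equiv 3+11-2=12\equiv 0\pmod 4$. Either way the inductive hypothesis carries over to sums of $k+1$ lines.

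The only genuinely subtle point, and the place where I would expect to have to argue most carefully, is that a given code word can admit several representations as a sum of lines, \emph{a priori} with summand counts of different parities, so one must check that the statement is well-posed. However, the preceding lemma already takes care of this: the parity of the number of lines summed is detected by the parity of $|x\cap l|$ for any line $l$, and this is an intrinsic property of $x$. So the induction closes without ambiguity, and I do not foresee a more serious obstacle than bookkeeping the residues modulo $4$.
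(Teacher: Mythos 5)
Your proof is correct and takes essentially the same approach as the paper: induction on the number of summed lines with base cases $k=0$ and $k=1$, using the weight formula $w(x+l)=w(x)+w(l)-2w(x\cap l)$ together with the intersection-parity lemma to track the residues modulo $4$. Your closing observation that the parity of the number of summands is intrinsic to the code word (since it is detected by $|x\cap l|$ for any line $l$) is a well-posedness point the paper leaves implicit, but it does not alter the argument.
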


\begin{proof} Start with the base case of the sum of zero lines from the incidence matrix. This is the zero code word and has weight $0$ (mod $4$). Any single row of the incidence matrix as a code word has weight $11=3$ (mod $4$). Now consider adding a line $l$, to a code word $x$, that is the sum of an even number of lines of the incidence matrix. The additional line will intersect the code word an even number of times, $2p$. Then the weight of the resulting code word has
$$w(x)+w(l)-2\cdot2p=0+3-4p \: \textnormal{(mod 4)}=3\:  \textnormal{(mod 4).}$$
If adding a line $l$ to a code word $x$ that is the sum of an odd number of lines, then the line will intersect the code word in an odd number of points, $2p+1$. Then the weight of the new code word is
$$w(x)+w(l)-2(2p+1)=3+3-4p-2\: \textnormal{(mod 4)}= 0\: \textnormal{(mod 4).}$$ \end{proof}

This allows us to express the weight enumerator more neatly since we now have $A_{4i+1}=A_{4i+2}=0$ for all $i$. Therefore we get separate sums for configurations of even weights and odd weights:
$$W_C(x,y)=\sum_{i=0}^{27}A_{4i}x^{111-4i}y^{4i}+\sum_{i=0}^{27}A_{4i+3}x^{108-4i}y^{4i+3}.$$

\begin{definition}
For a code $C$ the orthogonal dual code $C^\perp$ is a code of the same length as $C$ defined as  
$$C^\perp=\{x\mid\forall y \in C, \:y\cdot x=0\: \textnormal{(mod 2)}\}$$
\end{definition}

For finite dimensional vector spaces the orthogonal subspace has complementary dimension to the original subspace. That is, where $C$ is code space of length $N$
$$\dim(C)+\dim(C^\perp)=N.$$

\begin{example} Let $C$ be any code of length n. The zero code word $\underline{0}=(0,\dots,0)$ is in the orthogonal dual code as for any $c=(c_1,\dots,c_n)\in C$ we get $$c\cdot \underline{0}=\sum_{i=1}^n c_i\cdot 0=0$$
\end{example}

\begin{lemma} If $C$ is the code for the projective plane of order 10 and $x\in C$ then $x\in C^\perp$ if and only if $x$ is the sum of an even number of lines.\end{lemma}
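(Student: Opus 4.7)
The plan is to reduce $x \in C^\perp$ to a statement about intersections of $x$ with individual lines, and then to invoke the preceding lemma. First, since $C$ is spanned over $\mathbb{Z}/2\mathbb{Z}$ by the rows of the incidence matrix---equivalently, by the lines of the plane---and the mod-2 dot product is bilinear, the defining condition $x \cdot y \equiv 0 \pmod 2$ for all $y \in C$ is equivalent to $x \cdot l \equiv 0 \pmod 2$ holding just for every line $l$ of the plane. Next, I would give the dot product a combinatorial reading: since $x$ and $l$ are $\{0,1\}$-vectors indexed by the points of the plane, $x \cdot l$ simply counts the points lying both on the configuration $x$ and on the line $l$, so $x \cdot l \equiv |x \cap l| \pmod 2$.

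With that reduction in place, the previous lemma does the work directly: for each line $l$, $|x \cap l|$ is odd if and only if $x$ is a sum of an odd number of lines of the plane. Equivalently, $|x \cap l|$ is even for \emph{every} line $l$ if and only if $x$ is a sum of an even number of lines. Chaining these equivalences yields $x \in C^\perp$ iff $x$ is a sum of an even number of lines.

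There is no real obstacle here---the proof is just a short chain of reductions---but one subtlety deserves a remark and should be addressed explicitly. The phrase ``$x$ is a sum of an even number of lines'' must be well-defined as a property of $x$, not merely of a chosen representation of $x$ as such a sum. This is handled by the earlier weight-mod-4 lemma: the weight of $x$ distinguishes the even-sum case ($w(x) \equiv 0 \pmod 4$) from the odd-sum case ($w(x) \equiv 3 \pmod 4$), so any two decompositions of the same $x$ into lines must use the same parity of summands.
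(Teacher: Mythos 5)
Your proof is correct and follows essentially the same route as the paper: both reduce membership in $C^\perp$ to checking the mod-2 dot product against the spanning set of lines, and then apply the preceding intersection-parity lemma to both directions of the equivalence. Your closing remark on the well-definedness of ``sum of an even number of lines'' (via the weight-mod-4 lemma) is a worthwhile addition that the paper's proof leaves implicit.
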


\begin{proof} Let $x\in C$ be the sum of an even number of lines and let $y$ be any code word from $C$ which we can write as the sum of lines, $y=l_1+\cdots+l_k$. Any $l_i$ from $y$ intersects $x$ in an even number of points by Lemma 13 which means that there are an even number of places in the code where both $l_i$ and $x$ have one as an entry. Then $l_i\cdot x$ is the sum of an even number of ones so $l_i\cdot x=0$. By the distributive property of inner product we have
$$x \cdot y = x \cdot (l_1+\dots+l_k) = x\cdot l_1+\dots+ x\cdot l_k = 0+\dots+0=0$$
so $x\in C^\perp$.
If $x$ is the sum of an odd number of lines then take any line, $l\in C$ and the number of intersection points of $l$ and $x$ is odd. This means that the number of columns where $l$ and $x$ both have the entry one is odd and $l\cdot x = 1$. Therefore $x\notin C^\perp$. \end{proof}

\begin{lemma} If $C$ is the binary code for a finite projective plane then $$\dim(C\cap C^\perp)=\dim(C)-1.$$
\end{lemma}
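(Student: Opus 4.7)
The plan is to exhibit a surjective linear functional $\psi : C \to \mathbb{Z}/2\mathbb{Z}$ whose kernel equals $C \cap C^{\perp}$; rank-nullity then gives $\dim C - \dim(C \cap C^{\perp}) = 1$, which is what we want.

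For the construction, I would pick any line $l$ of the plane and define $\psi(x) := x \cdot l \pmod{2}$ for $x \in C$. The key verification is that this is independent of the choice of $l$. Writing $x$ as a sum of lines, $x = \sum_i \alpha_i l_i$, and using that any two distinct lines of the plane meet in a single point while each line meets itself in $n+1 = 11$ points, we have $l_i \cdot l \equiv 1 \pmod{2}$ for every pair of lines in the projective plane of order $10$. Hence $x \cdot l = \sum_i \alpha_i \pmod{2}$, which depends only on $x$ (and not on the particular choice of $l$ nor on the specific representation of $x$ as a sum of lines). Linearity of $\psi$ is immediate from bilinearity of the inner product, and $\psi(l) = l \cdot l = 11 \equiv 1$ shows $\psi$ is surjective.

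Finally, $\ker \psi$ consists of those $x \in C$ with $x \cdot l = 0$ for every line $l$. Since $C$ is spanned by its lines, this is equivalent to $x \cdot y = 0$ for all $y \in C$, i.e., $x \in C^{\perp}$; this is precisely the characterisation given in Lemma 16, where $C \cap C^{\perp}$ is identified with the code words that are sums of an even number of lines, which is exactly $\ker \psi$. The first isomorphism theorem then yields $C / (C \cap C^{\perp}) \cong \mathbb{Z}/2\mathbb{Z}$, giving the claim. The main obstacle is really just the well-definedness of $\psi$, which rests on $n + 1$ being odd so that all pairwise line intersections are $\equiv 1 \pmod{2}$; this is automatic for the order $10$ case of interest, whereas for odd $n$ the self-intersection $l \cdot l \equiv 0$ breaks the uniformity and a different argument would be needed.
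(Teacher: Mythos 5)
Your proof is correct, and it takes a genuinely different (and arguably cleaner) route than the paper. The paper argues by a dimension squeeze: it picks a basis $\{l_1,\dots,l_k\}$ of $C$ consisting of lines, shows that the $(k-1)$-dimensional subspace $L$ spanned by $\{l_2+l_1,\dots,l_k+l_1\}$ lies in $C\cap C^\perp$ (because each generator is an even sum of lines, invoking the preceding lemma that characterises $C^\perp\cap C$ as the even sums), and combines this with the strict inclusion $C\cap C^\perp\subsetneq C$ to force $\dim(C\cap C^\perp)=k-1$. You instead package the same underlying parity fact as a surjective functional $\psi(x)=x\cdot l$ and apply rank--nullity; your kernel is exactly the paper's subspace $L$ of even sums, so the two arguments are dual presentations of one idea. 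What your version buys: you derive the key fact $l_i\cdot l_j\equiv 1\pmod 2$ directly from the incidence axioms (two lines meet in one point; a line meets itself in $n+1$ points), rather than routing through the paper's Lemma~13/Lemma~17 chain, and you only need the lines to span $C$ rather than needing to extract a basis from among them. You also make explicit the hypothesis the paper leaves silent: the whole argument (theirs as well as yours) requires $n+1$ odd, i.e.\ $n$ even, even though the lemma is stated for an arbitrary finite projective plane; your closing remark correctly flags this. The only blemish is the citation of ``Lemma 16'' for the characterisation of $C^\perp$ --- in the paper's numbering that is Lemma~17 --- which is immaterial since you wrote without the source.
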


\begin{proof}Lemma 17 tells us that where $x\in C$ is the sum of an odd number of lines $x\notin C^\perp$. This tells us
$$\dim(C\cap C^\perp)\leq\dim(C^\perp)<\dim(C). $$
Let us now assume that $\dim(C)=k$ so we can choose a set of $k$ vectors of the code that form a basis for $C$. The lines of the plane are the generators for the code so they must be able to provide such a basis which we will write as $\{l_1,l_2,\ldots,l_k\}$. Next consider the vectors generated by $\{l_2+l_1,l_3+l_1,\ldots,l_k+l_1\}$, a linearly independent set of $k-1$ code words. These code words are a basis for a vector subspace $L\subset C$ as the dimension of $L$ is less than that of $C$. Additionally, since the generators for $L$ are all the sum of an even number of lines, every code word of $L$ is the sum of an even number of vectors. As a result, $L\subseteq C^\perp$ by Lemma 17 so $L\subseteq C\cap C^\perp$ resulting in
$$k-1=\dim(L)\leq \dim(C\cap C^\perp)<\dim(C)=k.$$
As the dimension can only take integer values we know
$$\dim(C\cap C^\perp)=k-1=\dim(C)-1.$$ \end{proof}

\begin{lemma} For $n$ divisible by $2$ exactly once, the code space $C$ of a projective plane of order $n$ has the properties:
\begin{enumerate}
\item $C^\perp \subset C$
\item $\dim(C)=\dfrac{n^2+n+2}{2}$ \quad and \quad $\dim(C^\perp)=\dfrac{n^2+n}{2}.$ 
\end{enumerate}
\end{lemma}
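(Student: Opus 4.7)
My plan is to reduce the dimension claim in part 2 to the containment in part 1, and then to prove $C^\perp\subset C$ by a lift-and-divide argument based on the integer identity $A^T A = nI+J$. The strict part of the containment is immediate: summing the $n+1$ lines through any fixed point $p$ produces a vector whose entry at each $q\neq p$ is $1$ (since $q$ lies on a unique such line) and whose entry at $p$ is $n+1\equiv 1\pmod 2$ since $n$ is even; this shows $\mathbf{1}\in C$. On the other hand $\mathbf{1}\cdot l=n+1\equiv 1\pmod 2$ for every line $l$, so $\mathbf{1}\notin C^\perp$. Hence $\mathbf{1}\in C\setminus C^\perp$.

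The main step is $C^\perp\subseteq C$. Here I would work with the integer identity $A^T A=nI+J$, which holds for the same counting reason as $AA^T=nI+J$ in Section~1.2 (two distinct points lie on a unique common line and any single point lies on exactly $n+1$ lines). Given $y\in C^\perp$, lift it to a $\{0,1\}$-valued column vector in $\mathbb{Z}^N$. Then $Ay\equiv 0\pmod 2$, so $Ay=2z$ for some $z\in\mathbb{Z}^N$, and multiplying by $A^T$ gives $ny+(\mathbf{1}^T y)\mathbf{1}=2A^T z$. Writing $n=2m$, this forces $\mathbf{1}^T y$ to be even, say $\mathbf{1}^T y=2s$, and the equation collapses to the integer identity $my=A^T z-s\mathbf{1}$.

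Now I reduce modulo $2$. The hypothesis that $2$ divides $n$ exactly once makes $m$ odd, so the coefficient of $y$ is a unit in $\mathbb{F}_2$ and the reduction becomes $y\equiv A^T z+s\mathbf{1}\pmod 2$. The right-hand side lies in $C$: the reduction of $A^T z$ is a sum of rows of $A$, and $\mathbf{1}\in C$ by the first paragraph. Hence $y\in C$, establishing part 1.

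Part 2 then follows mechanically. Since $C^\perp\subset C$ we have $C\cap C^\perp=C^\perp$, so Lemma~18 gives $\dim C^\perp=\dim C-1$; combined with the complementary-dimension relation $\dim C+\dim C^\perp=N=n^2+n+1$, this is a two-variable linear system whose unique solution is $\dim C=(n^2+n+2)/2$ and $\dim C^\perp=(n^2+n)/2$. The delicate point of the whole argument is precisely the mod-$2$ reduction in the lift-and-divide step: if $4\mid n$ then $m$ would be even, the coefficient of $y$ would vanish in $\mathbb{F}_2$, and the argument would extract information only about $A^T z$ rather than about $y$ itself, so the hypothesis $v_2(n)=1$ is essential rather than merely convenient.
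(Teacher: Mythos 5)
Your argument is correct, but it reaches the lemma by a genuinely different route from the paper. The paper never uses the identity $A^TA=nI+J$: it works with the determinant $\det(A)=\pm(n+1)n^{(n^2+n)/2}$ computed in Section 1.2, reduces $A$ to a triangular form with diagonal entries $h_1,\ldots,h_{N-1},n+1$, notes that $h_1\cdots h_{N-1}=\pm n^{(n^2+n)/2}$ forces at most $(n^2+n)/2$ of the $h_i$ to be even when $n$ is divisible by $2$ exactly once, and reads off $\dim(C)\geq (n^2+n+2)/2$ from the rows whose diagonal entry survives modulo $2$; only then does it combine Lemma 18 with $\dim(C)+\dim(C^\perp)=N$ to force equality, and it extracts $C^\perp\subset C$ at the very end from $\dim(C\cap C^\perp)=\dim(C^\perp)$. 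You invert this order: you prove the containment first by the lift-and-divide computation $ny+(\mathbf{1}^Ty)\mathbf{1}=2A^Tz$, divide by $2$ to get $my=A^Tz-s\mathbf{1}$ with $m=n/2$ odd, and reduce modulo $2$; the dimensions then fall out of Lemma 18 and complementarity exactly as in the paper's final step. Both proofs are sound and both lean on Lemma 18 at the same point. What your route buys is independence from the determinant formula and from the paper's somewhat delicate claim that $B_2$ can be ``diagonalised by row operations'' while still controlling parities (the careful version of that step is really a Smith-normal-form argument); what the paper's route buys is a self-contained lower bound on the $2$-rank of $A$ before any duality input. Your closing remark about where the hypothesis enters is exactly right, and your observation that $\mathbf{1}\in C\setminus C^\perp$ gives the strictness of the inclusion, which the paper instead obtains from $\dim(C^\perp)=\dim(C)-1$.
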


\begin{proof}For an incidence matrix of the projective plane $A$ recall that $A$ has dimensions $(n^2+n+1)\times(n^2+n+1)$. To simplify notation let $N=n^2+n+1$. Now we perform determinant preserving operations on $A$ since we previously calculated the determinant:
$$\det(A)=\pm (n+1)n^{(n^2+n)/2}.$$
First, we add each of the first $N-1$ columns to the last column resulting in
$$\det(A)=
\begin{vmatrix}
    B_1 & \begin{matrix} n+1 \\ n+1 \\ \vdots  \end{matrix} \\
    \begin{matrix} * & \cdots & * \end{matrix} & n+1
\end{vmatrix}.$$
Then we subtract the bottom row from each of the others giving
$$\det(A)=
\begin{vmatrix}
    B_2 & \begin{matrix} 0 \\ 0 \\ \vdots \\ 0 \end{matrix} \\
    \begin{matrix} * & \cdots & * \end{matrix} & n+1
\end{vmatrix}=(n+1)\det(B_2)$$
The last equality derives from using cofactor expansion on the last column of the matrix.
The determinant of $A$ is non-zero so we can infer that the determinant of $B_2$ is also non-zero and therefore $B_2$ can be diagonalised by row operations which do not change the absolute value of the determinant.
$$\det(A)=\pm\begin{vmatrix}
    \begin{matrix} h_1 & & & 0 \\ & h_2 & & \\ & & \ddots & \\ 0 & & &h_{N-1} \end{matrix} & \begin{matrix} 0 \\ 0 \\ \vdots \\ 0 \end{matrix} \\
    \begin{matrix} * & \cdots & \cdots & * \end{matrix} & n+1
\end{vmatrix}=\pm |A_2|$$
As this matrix is lower triangular we can find the determinant by taking the product of the diagonal elements.
\begin{align*}\det(A)=\pm h_1h_2\cdots h_{N-1}(n+1)&= \pm (n+1)n^{(n^2+n)/2} \\
h_1h_2\cdots h_{n^2+n}&=\pm n^{(n^2+n)/2}
\end{align*}
As we have $n$ divisible by 2 only once then $(n^2+n)/2$ is the maximum number of $h_i$ that are divisible by 2. Now we consider a representation of $A_2$ over a binary field by taking each entry modulo 2. The $h_i$ which were divisible by 2 are now zero in the matrix and so the corresponding rows are zero rows and dependent. Each of the other rows (of which there are at least $N-(n^2+n)/2=(n^2+n+2)/2$) contains a one on the diagonal (noting that $n+1=1$ (mod 2) as n is divisible by 2) and therefore they are linearly independent. Then
$$\dim(C)\geq (n^2+n+2)/2$$
and by Lemma 18
$$\dim(C^\perp)\geq \dim(C\cap C^\perp)=\dim(C)-1 \geq (n^2+n)/2.$$
As $\dim(C)$ and $\dim(C^\perp)$ must sum to the length of the code $N=n^2+n+1$ the preceding equations must be equalities.
The first property follows from $\dim(C\cap C^\perp)=\dim(C)-1=\dim(C^\perp)$ given by Lemma 18. Since the intersection is a subset of $C^\perp$ with the dimension of $C^\perp$ the intersection is the entirety of $C^\perp$. Therefore $C^\perp \subset C.$\end{proof}

\subsection{Weight-enumerator equating}

Finding the weight-enumerator polynomial for the projective plane of order 10 will tell us about its structure. It reflects the number of code words of each weight in the code which should be a natural number, so a negative or non-integer coefficient for any term of the polynomial would indicate the plane could not exist. A theorem given here without proof will be immensely useful. \\

\begin{theorem} (MacWilliams) For a binary code C and its orthogonal dual code $C^\perp$ the weight enumerators have the property
$$W_{C^\perp}(x,y)=\frac{1}{|C|}W_C(x+y,x-y).$$
\end{theorem}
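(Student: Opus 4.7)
The plan is to prove this via Fourier analysis on the additive group $\mathbb{F}_2^n$, where $n$ is the length of the code $C$. Introduce the auxiliary function $g\colon \mathbb{F}_2^n \to \mathbb{Z}[x,y]$ defined by $g(u) = x^{n-w(u)} y^{w(u)}$, so that for any linear subspace $D \subseteq \mathbb{F}_2^n$ we have $\sum_{u \in D} g(u) = W_D(x,y)$. The idea is then to relate $W_{C^\perp}(x,y)$, which is a sum of $g$ over $C^\perp$, to a sum over $C$ of the Fourier transform of $g$.

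First I would compute the transform
\[
\hat g(v) \; = \; \sum_{u \in \mathbb{F}_2^n} (-1)^{u \cdot v}\, g(u).
\]
The essential observation is that $g$ factors across coordinates, so this sum splits into a product of one-variable sums: for each coordinate $i$, the contribution is $\sum_{u_i \in \{0,1\}} (-1)^{u_i v_i} x^{1-u_i} y^{u_i}$, which equals $x+y$ when $v_i = 0$ and $x-y$ when $v_i = 1$. Collecting factors gives the clean formula $\hat g(v) = (x+y)^{n-w(v)} (x-y)^{w(v)}$.

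Next I would invoke character orthogonality on the subspace $C$: for any fixed $v \in \mathbb{F}_2^n$,
\[
\sum_{u \in C} (-1)^{u \cdot v} \;=\; \begin{cases} |C| & \text{if } v \in C^\perp, \\ 0 & \text{otherwise}, \end{cases}
\]
since the map $u \mapsto (-1)^{u \cdot v}$ is a nontrivial homomorphism $C \to \{\pm 1\}$ whenever $v \notin C^\perp$, so its kernel has index $2$ in $C$. Interchanging the order of summation in $\sum_{u \in C} \hat g(u)$ and applying this identity yields
\[
\sum_{u \in C} \hat g(u) \;=\; |C| \sum_{v \in C^\perp} g(v) \;=\; |C|\, W_{C^\perp}(x,y).
\]
Substituting the closed form for $\hat g$ on the left-hand side rewrites this as $\sum_{u \in C}(x+y)^{n-w(u)}(x-y)^{w(u)} = W_C(x+y,x-y)$, and dividing by $|C|$ gives the stated identity.

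The main obstacle is keeping the Fourier transform calculation clean: the factorisation step and matching the $v_i = 0,1$ cases to the substitutions $y \mapsto y$ and $y \mapsto -y$ must be handled carefully, since everything afterwards depends on it. Once $\hat g$ is in hand, the character orthogonality lemma and the swap of summation order are routine for finite abelian groups, and no deeper difficulty arises because every sum involved is finite and taken over $\mathbb{F}_2^n$ or one of its subspaces.
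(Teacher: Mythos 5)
Your proof is correct and complete. Note that the paper itself states the MacWilliams identity explicitly \emph{without} proof, so there is no in-paper argument to compare against; what you have written is the standard character-theoretic (discrete Poisson summation) proof: the factorisation of $\hat g$ across coordinates, the orthogonality relation $\sum_{u\in C}(-1)^{u\cdot v}=|C|$ or $0$ according as $v\in C^\perp$ or not, and the interchange of summation are all handled correctly (the only cosmetic point is the silent relabelling of the dummy variables $u$ and $v$ when you pass from $\sum_{u\in C}\hat g(u)$ to the double sum, which is harmless). This supplies a proof the paper omits rather than an alternative to one it contains.
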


Along with the MacWilliams identity we need to try and find as many $A_i$ values that are easily determined. Firstly, we know from Lemma 14 that there are no code words of weight 1 or 2 (mod 4). We also know that $A_0=1$ because the zero code word is the unique weight zero configuration in the code. \\

\begin{lemma} For the code of the projective plane of order 10, there are no configurations containing $k$ points for $1\leq k \leq 10$ so $A_1 = A_2 = \cdots = A_{10} = 0$. \end{lemma}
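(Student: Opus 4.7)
The plan is to combine the mod $4$ restriction from Lemma~14 with a simple pencil-counting argument based on Lemma~13 to rule out every nonzero weight below $11$.

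First, I would invoke Lemma~14 to dispose of $A_1, A_2, A_5, A_6, A_9, A_{10}$ immediately, since these indices are $1$ or $2$ mod $4$. This leaves only four weights to eliminate, namely $3, 4, 7, 8$, and it suffices to show that every nonzero code word has at least $11$ points.

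Second, I would split any nonzero $x \in C$ of weight $w$ by the parity of the number of summed lines. If $x$ is the sum of an even number of lines (so $w \equiv 0 \pmod 4$), then by Lemma~13 every line of the plane meets $x$ in an even number of points. I would pick a point $p$ on $x$ and look at the $11$ lines through $p$: each contains $p$ and meets $x$ in an even number of points, hence in at least $2$. Since every point of $x$ other than $p$ lies on exactly one of these lines, counting point by point gives
$$w - 1 = \sum_{l \ni p} \bigl(|l \cap x| - 1\bigr) \geq 11,$$
so $w \geq 12$. This disposes of $A_4$ and $A_8$.

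Third, if $x$ is the sum of an odd number of lines (so $w \equiv 3 \pmod 4$), then by Lemma~13 every line meets $x$ in an odd, hence positive, number of points. Here I would instead pick a point $q$ not on $x$, which exists since $w \leq 110 < 111$. The $11$ lines through $q$ partition the other $110$ points of the plane, and in particular partition the points of $x$; since each line contributes at least $1$ to this partition, $w \geq 11$. This disposes of $A_3$ and $A_7$ and completes the argument.

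I do not expect a serious obstacle: the proof is purely combinatorial and leans entirely on results already established. The only point requiring care is choosing the centre of the pencil correctly in each parity case (inside $x$ in the even case, outside $x$ in the odd case), so that the $11$ lines through it partition exactly the remaining points and the lower bound on each $|l \cap x|$ can be summed without over- or under-counting.
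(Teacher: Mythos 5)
Your proposal is correct and is essentially the paper's own argument: in both cases the key step is the pencil of $11$ lines through a suitably chosen point ($p \in x$ when the weight is even, forcing each line to meet $x$ at least twice and giving $w \geq 12$; $q \notin x$ when the weight is odd, giving $w \geq 11$), combined with Lemma~13. Your preliminary appeal to Lemma~14 is harmless but redundant, since the two parity cases already cover every code word.
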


\begin{proof} Let $x\in C$ be a configuration of odd weight less than 111. Then consider a point $p$ not on $x$. There are 11 lines passing through $p$ which must all intersect $x$ in an odd number of points by Lemma 13. Therefore each line must intersect $x$ at least once and the points must all be distinct as the lines have their only intersection at $p$. Therefore $w(x)\geq 11$.\\

If instead $x\in C$ is a configuration of even weight greater than 0, consider a point $p$ on $x$. There are 11 lines on $p$ and each must intersect $x$ in an even number of points (Lemma 13) and therefore at least twice. Each line intersects $x$ at a point other than $p$ and these 11 points are all distinct since the lines intersect at $p$. Therefore $w(x)\geq 12$. \end{proof} 

\begin{lemma} The projective plane of order 10 has 111 configurations of weight 11 so $A_{11}=111.$
\end{lemma}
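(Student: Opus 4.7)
My plan is to show the two inclusions: the set of weight-$11$ code words equals the set of lines of the plane, which has cardinality $111$.

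One direction is immediate. Every line of the projective plane of order $10$ is a row of the incidence matrix, hence a code word, and contains $n+1=11$ points, so it has weight $11$. By Lemma 2 there are $n^{2}+n+1=111$ lines, which are pairwise distinct as vectors, giving at least $111$ code words of weight $11$.

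For the converse I would take an arbitrary $x\in C$ with $w(x)=11$ and argue that $x$ must be a line. Since $11\equiv 3\pmod 4$, Lemma 14 tells us that $x$ is a sum of an odd number of lines, so by Lemma 13 every line $\ell$ of the plane meets $x$ in an odd number of points. I would then mimic the counting in the proof of Lemma 21, but applied to a point $p\notin x$ (such a $p$ exists because $w(x)=11<111$). The $11$ lines through $p$ each meet $x$ in an odd, hence positive, number of points, and these intersection sets are pairwise disjoint because distinct lines through $p$ meet only at $p\notin x$. The sum of the $11$ intersection sizes therefore equals $w(x)=11$, and a sum of $11$ positive odd integers equal to $11$ forces each to be exactly $1$. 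Thus every line through every point outside $x$ meets $x$ in exactly one point.

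The key step, and the part I expect to be the main obstacle to formulate cleanly, is to turn this into "$x$ is a line". I would argue as follows: pick any two distinct points $a,b\in x$ and let $\ell=ab$ be the unique line through them. Suppose for contradiction that $\ell$ contains some point $p\notin x$. Then the unique line through $p$ and $a$ is $\ell$, and the unique line through $p$ and $b$ is also $\ell$, so the single line $\ell$ through $p$ meets $x$ in at least the two points $a$ and $b$, contradicting the conclusion of the previous paragraph. Hence every point of $\ell$ lies in $x$, i.e.\ $\ell\subseteq x$, and since $|\ell|=11=|x|$ we get $\ell=x$, so $x$ is a line.

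Combining the two directions, the weight-$11$ code words are exactly the $111$ lines, so $A_{11}=111$. The routine parts are the parity/counting from Lemmas 13 and 14; the conceptual step is the incidence argument that rules out any weight-$11$ code word which is not itself a line.
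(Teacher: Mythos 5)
Your proposal is correct and follows essentially the same route as the paper: both use Lemmas 13 and 14 to show a weight-$11$ code word meets every line oddly, then take two points $a,b$ of $x$ and derive a contradiction from a hypothetical point $p\notin x$ on the line $ab$ by counting the intersections of the lines through $p$ with $x$. Your intermediate observation that the $11$ lines through an external point partition $x$ into $11$ odd positive parts, forcing each intersection to be a single point, is a slightly tidier packaging of the same counting argument.
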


\begin{proof} There are 111 lines of the projective plane which each contain 11 points. If we show that every code word with weight 11 is a line of the plane then we know they are equivalent and $A_{11}=111$. Consider $x\in C$ where $w(x)=11$. Then $x$ is the sum of an odd number of lines (Lemma 14) so every line of the plane intersects it an odd number of times (Lemma 13). Choose points $a$, $b$ on $x$ which must lie on a unique line. Now we have two possibilities; There is some point $p$ on the line and not on $x$ or all the points of $x$ lie on this line. In the first case consider the 10 other lines passing through $p$ which must each intersect $x$ at least once. All of the points of intersection must be distinct as the lines already have an intersection at $p$. This implies there are 10 more points on $x$ other than $a$ and $b$ which is impossible. Therefore the second case must occur and the line $ab$ contains all points of $x$ so $x$ is a line. \end{proof}

This gives us $A_0=1$, $A_1 = A_2 = \cdots = A_{10} = 0$ and $A_{11}=111$. Finally, note that the values come in pairs where $A_i = A_{111-i}$ as the sum of all the lines contains every point. This means that for all sum of $k$ lines containing $m$ points the other $111-k$ lines sum to the other $111-m$ points and so every code word has a matching configuration of opposite weight.

From Lemma 19 we know that the dimension of $C$ is 56. Being a binary code we can deduce $|C|=2^{56}$. Now we can use the MacWilliams identity to equate:

\hspace*{-1cm}\vbox{\begin{align*}\sum_{i=0}^{27}A_{4i}x^{111-4i}y^{4i}= \frac{1}{|C|}\left[ \sum_{i=0}^{27}A_{4i}(x+y)^{111-4i}(x-y)^{4i}+\sum_{i=0}^{27}A_{4i+3}(x+y)^{108-4i}(x-y)^{4i+3}\right]\end{align*}}

Equating the coefficients for the polynomials gives a system of linear equations in terms of $A_{4i}$ as $A_{4i+3}=A_{111-(4i+3)}=A_{4(27-i)}$. Reducing in Mathematica tells us that the system is underdetermined and there are three degrees of freedom. The workbook used in equating the polynomials can be found at 	
goo.gl/Gl0gCi. As shown by the output three more $A_{i}$ values need to be calculated to find the weight enumerator. Computer searches provided answers, finding $A_{12}=A_{15}=A_{19}=0$ \cite{B10,B15C,B12}. The method used for the searches will be explained in more depth in Section 4.

Now all the $A_i$ are determined and are given here:
$$\begin{array}{lrr}
\hline
\multicolumn{2}{c}{i}  & \multicolumn{1}{c}{A_i}\\
\hline
0 & 111 & 1 \\
11 & 100 & 111 \\
19 & 92 & 24,675 \\
20 & 91 & 386,010 \\
23 & 88 & 18,864,495 \\
24 & 87 & 78,227,415 \\
27 & 84 & 2,698,398,790 \\
28 & 83 & 8,148,873,195 \\
31 & 80 & 166,383,964,620 \\
32 & 79 & 415,533,405,150 \\
35 & 76 & 5,023,148,053,500 \\
36 & 75 & 10,604,483,511,375 \\
39 & 72 & 78,347,862,432,300 \\
40 & 71 & 141,031,595,676,060 \\
43 & 68 & 653,162,390,747,370 \\
44 & 67 & 1,009,413,831,402,540 \\
47& 64 & 2,982,186,455,878,665 \\
48& 63 & 3,976,279,652,851,020 \\
51& 60 & 7,582,305,834,092,682 \\
52& 59 & 8,748,789,607,170,360 \\
55& 56 & 10,841,059,295,003,634 \\
\hline
\end{array}$$

\subsection{Code word of weight 19}
Now that the weight enumerator has been determined it can be seen that $A_{19} = 24,675$ is the smallest configuration which is the sum of more than one line that is present in the code. The fact that a weight 19 configuration exists in the code provides information about the possible structure of the incidence matrix so it can be used as a basis for generating the matrix.

Let $x\in C$ be a code word of weight 19. As $x$ contains an odd number of points it is the sum of an odd number of lines and therefore any line, $l$, intersects $x$ in an odd number of points. More specifically, $l$ must intersect $x$ in 1, 3 or 5 points. This is because if there are 11, 9 or 7 points of intersection for $x$ and $l$ then $w(x+l)=8$, 12 or 16 respectively. We already know there are no configurations of these weights from the weight enumerator so there must be 1, 3 or 5 intersections from lines of the plane and we will call them ``single lines'', ``triple lines'' and ``heavy lines'' correspondingly.

Now we make a system of equations to find the number of lines of the plane intersecting $x$ in 1, 3 or 5 points which we will represent by $l_1$, $l_3$ and $l_5$. There are 111 lines of the plane in total and they are partitioned by how many times they intersect $x$ giving
$$ l_1+l_3+l_5=111.$$
Another equation is formed by considering how many lines need to lie on the points of $x$. Each of the 19 points in $x$ is on 11 lines so collectively the lines need to intersect $x$ $19\times 11 = 209$ times. Since each $l_1$ intersects $x$ once; $l_3$, three times and the $l_5$ intersect $x$ five times we have
$$l_1+3l_3+5l_5=209.$$
Lastly, we expect any two points of $x$ to lie on exactly one line. The number of distinct pairs of points in $x$ is $\binom{19}{2}=171$. No single line contains two points of $x$ but each triple line contains three points of $x$ so provides a line for $\binom{3}{2}=3$ distinct pairs. Similarly each heavy line contains five points of $x$ so contains $\binom{5}{2}=10$ pairs in $x$. Therefore our third equation is
$$3l_3+10l_5=171.$$
Solving the system gives us these solutions:
$$ l_1=68 \quad l_3=37 \quad  l_5=6$$

Next we can consider how the incidence matrix might look. We can write it as

$$\begin{blockarray}{ccc}
\begin{block}{ccc}
& 19 & 92 \\
\end{block}
\begin{block}{c[cc]}
6 & A_1 & A_2 \\
37 & B_1 & B_2 \\
68 & C_1 & C_2 \\
\end{block}
\end{blockarray}$$

where the first 19 columns ($A_1,B_1,C_1$) of the matrix represent the points in $x$. In the matrix $A_1$ and $A_2$ are the 6 heavy lines; $B_1$ and $B_2$ are the 37 triple lines and $C_1$ and $C_2$ are the 68 single lines. 

Now we want to know more about the structure of the lines passing through $x$. Consider a point $p\in x$ and let $p_1$, $p_3$ and $p_5$ be the number of single, triple and heavy lines containing $p$. Then as $p$ is on 11 lines we conclude
$$p_1+p_3+p_5=11.$$
The other 18 points of $x$ must each lie on precisely one line through $p$. Each triple line passes through two other points of $x$ and each heavy line passes through four other points of $x$ so we get the equation
$$2p_3+4p_5=18.$$
Choosing integer values for $p_5$ allows us to generate the following table of possibilities.
$$\begin{array}{ccc}
p_1 & p_3 & p_5\\
\hline
2 & 9& 0\\
3 & 7 & 1 \\
4 & 5 & 2 \\
5 & 3 & 3 \\
6 & 1 & 4
\end{array}
$$

Now we can consider options for $A_1$. This is the incidence matrix where the heavy lines intersect $x$. There are 66 non-isomorphic matrices meaning the starting points differ in structure so they cannot be made to look the same by simply changing the order of rows or columns. Two examples of non-isomorphic configurations for $A_1$ are:

$$X =\begin{array}{*{22}c}
1 & 1 & 1 & 1 & 1 & & 0 & 0 & 0 & 0 & 0 & & 0 & 0 & 0 & 0 & 0 & & 0 & 0 & 0 & 0 \\
1 & 0 & 0 & 0 & 0 & & 1 & 1 & 1 & 1 & 0 & & 0 & 0 & 0 & 0 & 0 & & 0 & 0 & 0 & 0 \\
1 & 0 & 0 & 0 & 0 & & 0 & 0 & 0 & 0 & 1 & & 1 & 1 & 1 & 0 & 0 & & 0 & 0 & 0 & 0 \\
1 & 0 & 0 & 0 & 0 & & 0 & 0 & 0 & 0 & 0 & & 0 & 0 & 0 & 1 & 1 & & 1 & 1 & 0 & 0 \\
0 & 1 & 0 & 0 & 0 & & 1 & 0 & 0 & 0 & 1 & & 0 & 0 & 0 & 1 & 0 & & 0 & 0 & 1 & 0 \\
0 & 0 & 1 & 0 & 0 & & 0 & 1 & 0 & 0 & 0 & & 1 & 0 & 0 & 0 & 0 & & 0 & 0 & 1 & 1 \\
\end{array}$$

$$Y =\begin{array}{*{22}c}
1 & 1 & 1 & 1 & 1 & & 0 & 0 & 0 & 0 & 0 & & 0 & 0 & 0 & 0 & 0 & & 0 & 0 & 0 & 0 \\
1 & 0 & 0 & 0 & 0 & & 1 & 1 & 1 & 1 & 0 & & 0 & 0 & 0 & 0 & 0 & & 0 & 0 & 0 & 0 \\
1 & 0 & 0 & 0 & 0 & & 0 & 0 & 0 & 0 & 1 & & 1 & 1 & 1 & 0 & 0 & & 0 & 0 & 0 & 0 \\
0 & 1 & 0 & 0 & 0 & & 1 & 0 & 0 & 0 & 1 & & 0 & 0 & 0 & 1 & 1 & & 0 & 0 & 0 & 0 \\
0 & 0 & 1 & 0 & 0 & & 0 & 1 & 0 & 0 & 0 & & 1 & 0 & 0 & 0 & 0 & & 1 & 1 & 0 & 0 \\
0 & 0 & 0 & 1 & 0 & & 0 & 0 & 1 & 0 & 0 & & 0 & 1 & 0 & 0 & 0 & & 0 & 0 & 1 & 1 \\
\end{array}$$

Both configurations look like they have potential to be part of our incidence matrix. However, it can be shown that neither of these are viable as $A_1$.

In $X$ we can see there are four ones in the first column so this point of $x$ has four heavy lines on it. From the table we expect it to also have one triple line on it. This triple line must have points 1, 18 and 19 on it since every point other than 18 and 19 is already on a line through 1. The 6th row already has 18 and 19 on it which is a contradiction since there should only be one line containing both. Therefore $X$ cannot be $A_1$ for the incidence matrix of a projective plane of order 10.

Starting with $Y$ also leads to contradictions. Notice that the last three rows of $Y$ have no intersection within $x$. They must intersect somewhere outside $x$, but not in a single point $p$ or this point lies on three heavy lines and has eight other lines which intersect $x$ at least once. This would mean the lines through $p$ would intersect $x$ in at least 23 distinct points, clearly impossible for a code word of weight 19. Consider the code word generated by adding $x$ and the last three lines. Since none of the lines intersect in $x$, a point in $x$ is either on one line or none of them. The points of $x$ that are in the resulting code word are 1, 5, 9 and 13. Points outside $x$ that are in the code word are points on one of the heavy lines and not at the intersection of two of them. Each of the three heavy lines contains six points outside of $x$, 4 distinct points and two points providing intersections with the other two heavy lines. Therefore the code word has 1, 5, 9, and 13 from $x$ and four points from outside of $x$ for each heavy line for a total of 16 points. As this was already excluded as a possibility by the weight enumerator $Y$ is not a possible start for the incidence matrix.  

Of the 66 possibilities for $A_1$ similar arguments rule out 21 of the  $A_1$ configurations. This leaves 45 starting submatrices of size $6\times 19$ that have the potential to be extended. The process undertaken by computer search was first to extend the $6\times 19$ submatrix to one of size $43\times 19$ by finding all possible ways the triple lines could lie on the 19 points of $x$. Each of these has a unique extension (up to isomorphism) to a $111\times 19$ configuration by adding the incidence of the single lines on $x$.

Next the incidence of points not in $x$ needed to be found. Each heavy line contains six points outside of $x$ so the six columns corresponding to one of the heavy lines is completed first. After this the external points of another heavy line are chosen to continue the matrix. This next addition might only consist of five new columns if the heavy lines intersect outside $x$ and this makes the search more efficient where possible. When the computer search was carried out, although every configuration could be extended to some incidence submatrix on the points of 4 heavy lines, none of the 45 starting arrangements could generate incidence on a 5th heavy line that was consistent with the rest of the matrix. Therefore it was concluded that no incidence matrix exists for the projective plane of order 10 and that the plane too cannot exist.

\newpage
\section{A Search for Code Words of Weight 15}

In this section we will explore how MacWilliams, Thompson and Sloane \cite{B15C} demonstrated that there are no code words, $x$, of weight 15 generated by the incidence matrix of the projective plane of order 10. We will follow the reasoning of their article and carry out a search for code words of weight 15 in order to confirm their result. The method used is similar to that shown in the previous section where the existence of such a configuration $x$ generates a submatrix which cannot be extended to an entire incidence matrix.

The Lemmas from section 3 provide almost enough background for us to start looking at properties of a projective plane where there is a 15 point configuration. Beforehand however, we need to talk about properties of hyperovals of the plane.

\subsection{Hyperovals of the Plane}

\begin{definition}A hyperoval of a projective plane of order $n$ where $n$ is even is a set of $n+2$ points, no three of which are collinear. \\
\end{definition}

\begin{lemma} Code words of weight 12 in the code are precisely the hyperovals of the plane.
\end{lemma}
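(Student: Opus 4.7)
The plan is to prove the two directions separately, using the parity constraints from Lemmas 13 and 14 for one direction and the self-duality consequence of Lemma 19 for the other.

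For the forward direction, suppose $x \in C$ has $w(x) = 12$. Since $12 \equiv 0 \pmod 4$, Lemma 14 tells us $x$ is the sum of an even number of lines, so by Lemma 13 every line meets $x$ in an even number of points. The key step is a local counting argument at each point $p \in x$: the 11 lines through $p$ partition the remaining 110 points of the plane, and in particular partition the 11 points of $x \setminus \{p\}$. If the $i$-th such line contains $k_i$ points of $x$, then $k_i$ is even and $k_i \geq 2$ (since $p$ itself is on the line), and
$$\sum_{i=1}^{11}(k_i-1) = |x \setminus \{p\}| = 11.$$
Each summand is a positive odd integer, and eleven positive odd integers summing to eleven must each equal $1$. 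Hence $k_i = 2$ for every line through $p$, so no third point of $x$ lies on any line through $p$. As $p \in x$ was arbitrary, no three points of $x$ are collinear, and $x$ is a hyperoval.

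For the reverse direction, let $H$ be a hyperoval, viewed as the characteristic vector of a $12$-point set with no three collinear. Counting line-types: each of the $\binom{12}{2}=66$ pairs of points of $H$ lies on a unique line, so exactly $66$ lines meet $H$ in $2$ points; comparing with the incidence count $12 \cdot 11 = 132 = s_1 + 2\cdot 66$ forces $s_1 = 0$, so every line meets $H$ in $0$ or $2$ points. In particular $H \cdot l \equiv 0 \pmod 2$ for every line $l$, and since the lines generate $C$ it follows that $H \in C^\perp$. Because the order $n = 10$ is divisible by $2$ exactly once, Lemma 19 gives $C^\perp \subset C$, whence $H \in C$, a code word of weight $12$.

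The only delicate point is remembering that $H \in C$ is not immediate from $H \in C^\perp$; it requires the self-duality-type property $C^\perp \subset C$ that was set up precisely for this use case. The rest is routine: the forward direction is a clean parity/pigeonhole argument using only the lemmas already proved, and the reverse direction is a short double-counting followed by an appeal to Lemma 19. I do not anticipate any serious obstacle.
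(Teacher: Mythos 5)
Your proposal is correct, but the reverse direction takes a genuinely different route from the paper. For the forward direction your argument is essentially the paper's: both fix a point $p$ of the weight-12 word and use the even-intersection property (Lemma 13 via Lemma 14) together with the fact that the 11 lines through $p$ partition $x\setminus\{p\}$ to force every such line to meet $x$ exactly twice; your $\sum_{i}(k_i-1)=11$ formulation is just a slightly more formal version of the paper's counting. For the reverse direction, however, the paper is constructive: it takes the 66 secant lines of the hyperoval $\sigma$, counts incidences to show every point outside $\sigma$ lies on exactly six of them, and concludes that the binary sum of those 66 lines is precisely the characteristic vector of $\sigma$, exhibiting it directly as an element of $C$. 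You instead show by double counting that every line meets $H$ in 0 or 2 points, deduce $H\in C^\perp$ because the lines generate $C$, and then invoke $C^\perp\subset C$ from Lemma 19. Both are valid; yours is shorter and correctly identifies that the self-containment property of Lemma 19 is exactly what is needed, but it leans on the dimension/determinant machinery behind that lemma, whereas the paper's construction is self-contained and, as a bonus, tells you explicitly which linear combination of lines produces the hyperoval (a fact the paper's later structural analysis implicitly exploits). Either proof would serve.
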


\begin{proof} Let $x$ be a code word of weight 12. Every line of the plane intersects $x$ in an even number of points by Lemma 13. Consider any point $p$ on $x$. Then $p$ has 11 lines passing through it and each line must intersect $x$ in at least one other point to have an even number of intersections. Each of these additional points must be distinct since the lines intersect at $p$ so the 11 points and $p$ account for all 12 points and no line can intersect more than twice. Therefore $x$ is a hyperoval.

Now take a hyperoval, $\sigma$. Let $L$ be the set of $\binom{12}{2}=66$ lines which intersect with $\sigma$. Consider the submatrix of the incidence matrix corresponding to the lines of $L$.

$$\begin{blockarray}{ccccccccc}
\begin{block}{cc\BAmulticolumn{4}{c}ccc}
& & \sigma & & & \\
\end{block}
\begin{block}{cc(cccc)ccc}
& & 1 & 2 & \cdots & 12 & 13 & \cdots & 111 \\
\end{block}
\begin{block}{cc[ccccccc]}
& 1 & a_{1,1} & a_{1,2} & \cdots & a_{1,12} & a_{1,13} & \cdots & a_{1,111} \\
& 2 & a_{2,1} & a_{2,2} & \cdots & a_{2,12} & a_{2,13} &\cdots & a_{2,111} \\
L & 3 & a_{3,1} & a_{3,2} & \cdots & a_{3,12} & a_{3,13} & \cdots & a_{3,111} \\
 & \vdots & \vdots &  \vdots & \ddots & \vdots & \vdots & \ddots & \vdots \\
 & 66 & a_{66,1} & a_{66,2} & \cdots & a_{66,12} & a_{66,13} & \cdots & a_{66,111} \\
\end{block}
\end{blockarray}$$

The first 12 columns each contain 11 ones since the 11 lines lying on each point of $\sigma$ must be part of the line set $L$. Each of the 66 rows  also contains 11 ones as they are entire lines of the projective plane containing 11 points. This leaves $66\cdot 11-12\cdot 11=54\cdot 11=6\cdot 99$ ones in columns 13 onward. No point $p\notin \sigma$ lies on more than 6 lines as that implies there are 7 lines passing through $p$ that intersect $\sigma$. Each line must intersect $\sigma$ twice and the points of intersection must be distinct as the lines all pass through $p$ and this is not possible as $\sigma$ does not contain 14 points. As there are 99 points not in $\sigma$ and $6\cdot 99$ points to distribute among them with no more than 6 in each then there must be precisely 6 ones in each of the columns 13-111.
Now if we take the linear combination that is the sum of all 66 of these lines in binary then $p\in \sigma$ lies on an odd number (11) and $p\notin \sigma$ lies on an even number of lines (6). The resulting configuration is a vector of weight 12 in the code space. \end{proof}

\subsection{Assuming a Weight 15 Code Word Exists}
We will assume there is some code word of weight 15 and see what is implied.
Let $A=\{1,2,\ldots,15\}$ be the set of points contained in such a code word. \\

\begin{lemma}Every line of the projective plane intersects A in 1, 3 or 5 points.
\end{lemma}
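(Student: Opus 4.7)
The plan is to use parity to restrict the possible intersection sizes to odd numbers, then use the known zero coefficients of the weight enumerator (plus the hyperoval characterization of weight-12 code words) to rule out the large values. The starting observation is that $w(A)=15\equiv 3\pmod 4$, so by Lemma 14 the code word $A$ must be a sum of an odd number of lines of the plane. By Lemma 13 this forces every line $l$ to meet $A$ in an odd number of points. Since a line contains exactly $11$ points, the intersection size $k:=|l\cap A|$ lies in $\{1,3,5,7,9,11\}$, and the task reduces to eliminating $k=7,9,11$.

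For each such $k$ I would compute $w(A+l)$ using Lemma 12, which gives
$$w(A+l)=w(A)+w(l)-2k=26-2k.$$
Thus $k=11$ yields a code word of weight $4$ and $k=9$ yields a code word of weight $8$, both of which are excluded immediately by Lemma 21 (which says $A_1=\cdots=A_{10}=0$). These two cases are therefore disposed of purely by the known vanishing of small $A_i$.

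The genuinely interesting case is $k=7$, which gives $w(A+l)=12$. This value is not ruled out by any previous enumeration lemma, so here I invoke the structural result of Lemma 24: every weight-$12$ code word is a hyperoval, i.e.\ a set of $12$ points with no three collinear. But the symmetric difference $A+l=(A\setminus l)\cup(l\setminus A)$ contains the $11-7=4$ points of $l\setminus A$, and all $4$ of these points sit on the single line $l$. A hyperoval contains no $3$ collinear points, so $4$ collinear points in $A+l$ is a contradiction, and $k=7$ is impossible.

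The only real obstacle is the $k=7$ case, since $k=9,11$ fall out of Lemma 21 mechanically; everything else is just parity plus the weight-addition formula. The rest of the argument is essentially bookkeeping, and the key input that one must remember to bring to bear is the hyperoval characterization of weight-$12$ code words proved just before the statement.
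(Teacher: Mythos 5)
Your proof is correct and follows essentially the same route as the paper: parity via Lemmas 13/14 restricts the intersection to odd sizes, the weight formula $w(A+l)=26-2k$ together with $A_4=A_8=0$ kills $k=9,11$, and the hyperoval characterization of weight-12 code words kills $k=7$ because $l\setminus A$ gives four collinear points in $A+l$. The only discrepancy is a harmless one of labelling (the weight-addition formula is the paper's Lemma 10, not 12).
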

\textbf{Proof.} Let $l$ be any line of the projective plane. It must intersect $A$ in an odd number of points by Lemma 13. Then by Lemma 10 we get
\begin{align*} w(A+l)&=w(A)+w(l)-2w(A\cap l) \\
&=15+11-2w(A\cap l)
\end{align*}
Lemma 21 tells us that weight of $A+l$ cannot be 8 or 4 so $w(A\cap l)$ cannot be 9 or 11. Lastly, the number of intersections of $A$ and $l$ cannot be 7 or $A+l$ contains 12 points and is therefore a hyperoval, yet $l$ intersects $A+l$ in the 4 points of $l$ not in $A$. This contradicts our earlier definition of hyperovals. Therefore $A$ and $l$ intersect in 1, 3 or 5 points. \\

\begin{lemma}Of the 111 lines in the projective plane, 90 intersect $A$ once, 15 intersect $A$ three times and the remaining 6 intersect five times. 
\end{lemma}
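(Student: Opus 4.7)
The plan is to mimic exactly the linear-algebra counting argument that the paper already used for the weight-19 case, now with the parameters scaled to weight 15. Let $l_1$, $l_3$, $l_5$ denote the number of lines of the projective plane intersecting $A$ in $1$, $3$ and $5$ points respectively; by the previous lemma these are the only possibilities, so every line is counted exactly once.

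First I would write down three linear equations in $l_1, l_3, l_5$. The total-line equation is $l_1+l_3+l_5=111$. Next, counting incidences point-by-point: each of the 15 points of $A$ lies on $n+1=11$ lines of the plane, so the sum of incidences is $15 \cdot 11 = 165$, giving $l_1+3l_3+5l_5=165$. Finally, counting unordered pairs of points of $A$: any two points of $A$ determine a unique line, and a line with $k$ points in $A$ accounts for $\binom{k}{2}$ such pairs; since $\binom{1}{2}=0$, $\binom{3}{2}=3$, $\binom{5}{2}=10$, and $\binom{15}{2}=105$, this produces $3l_3+10l_5=105$.

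Then I would just solve the system. Eliminating $l_1$ from the first two equations gives $2l_3+4l_5=54$, i.e.\ $l_3+2l_5=27$. Combining with $3l_3+10l_5=105$ yields $l_5=6$, $l_3=15$, and hence $l_1=90$, which is the claim.

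There is really no main obstacle here: the only substantive input is the preceding lemma, which restricts line-intersections to $\{1,3,5\}$, and once that is in place the $3\times 3$ system is non-degenerate and has a unique non-negative integer solution. The one thing worth remarking in the write-up is that this unique solvability is a feature of the specific weight 15 (with weight 19 in Section 3 the same method gave $68$, $37$, $6$), so the computation itself doubles as a sanity check that a weight-15 configuration is not ruled out at this elementary counting level and the search must be pushed further.
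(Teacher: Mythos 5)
Your proposal is correct and follows essentially the same route as the paper: both set up the identical three counting equations ($l_1+l_3+l_5=111$, $l_1+3l_3+5l_5=11\cdot 15=165$, and $3l_3+10l_5=\binom{15}{2}=105$) using the restriction to intersections of size $1$, $3$, or $5$ from the preceding lemma, and solve to get $l_1=90$, $l_3=15$, $l_5=6$. Your explicit elimination steps and the closing remark about unique non-negative integer solvability are fine additions but do not change the argument.
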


\begin{proof} We will use the same notation and technique as we used when looking at the weight 19 code word in the last section. This means we will give the names single lines, triple lines and heavy lines to lines with 1,3 or 5 intersections with $A$ and let the number of each be $l_1,l_3$ and $l_5$ respectively.

Every line of the projective plane intersects $A$ in one of 1,3 or 5 points so one equation is
$$l_1+l_3+l_5=111.$$
We can also consider the fact that every pair of points in $A$ (of which there are $\binom{15}{2}=105$) must have a line passing through both. Each triple line passes through $\binom{3}{2}=3$ pairs of points of $A$ and each heavy line passes through $\binom{5}{2}=10$ pairs giving
$$3l_3+10l_5=105.$$
A third equation is generated by noting that each point of $A$ has 11 lines passing through it so the total number of intersection of lines with points in $A$ is $11\cdot15$. We also know how many times each single, triple and heavy line intersects points of $A$ so 
$$ l_1+3l_3+5l_5=11\cdot 15.$$
Solving the 3 equations gives us the result
$$ l_1=90 \quad \quad l_3=15 \quad \quad l_5=6. $$ \end{proof}

Using notation consistent with \cite{B15C} we label the heavy lines $B_1,\ldots,B_6$.\\

\begin{lemma}The intersection of two distinct $B_i$s is a point in $A$.
\end{lemma}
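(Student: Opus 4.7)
The plan is to double count incidences between the six heavy lines $B_1,\ldots,B_6$ and the points of $A$. For each $p\in A$ I will pin down a small set of possibilities for $p_5$, the number of heavy lines through $p$, and then use convexity of $\binom{x}{2}$ to squeeze the count of pairs of heavy lines meeting inside $A$ against its natural upper bound of $\binom{6}{2}=15$.

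First I would run the local argument already used for the weight-$19$ analysis. If $p_1,p_3,p_5$ denote the numbers of single, triple, and heavy lines through a fixed $p\in A$, then counting all lines through $p$ gives $p_1+p_3+p_5=11$, while counting the other $14$ points of $A$ (each of which lies on exactly one line through $p$, contributing $2$ to the total if that line is triple and $4$ if it is heavy) gives $2p_3+4p_5=14$. Solving yields $p_3=7-2p_5$ and $p_1=4+p_5$, so $p_5\in\{0,1,2,3\}$.

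Next I would set up two global counts. Summing over $p\in A$ and using that each $B_i$ contains exactly five points of $A$,
\[
\sum_{p\in A}p_5\ =\ 5\cdot 6\ =\ 30.
\]
On the other hand, for each $p\in A$ the quantity $\binom{p_5}{2}$ counts the pairs of heavy lines meeting at $p$, so the global sum $\sum_{p\in A}\binom{p_5}{2}$ equals the number of unordered pairs $\{B_i,B_j\}$ whose unique intersection point lies in $A$. This is automatically at most the total number of pairs $\binom{6}{2}=15$.

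Finally I would invoke convexity of $\binom{x}{2}$: Jensen's inequality applied to the $15$ values $p_5$ (which have average $2$) gives
\[
\sum_{p\in A}\binom{p_5}{2}\ \geq\ 15\binom{2}{2}\ =\ 15,
\]
with equality if and only if every $p_5$ equals $2$. Combined with the upper bound $15$ this squeezes the inequality to equality and forces all $\binom{6}{2}$ pairs of heavy lines to meet in $A$, giving the lemma. The main subtlety is just noticing that the two bounds have to coincide; once the possibility table for $p_5$ is in hand, and the observation that $2$ sits right at the average value of $p_5$, the convexity step is essentially automatic.
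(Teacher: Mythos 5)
Your argument is correct, and it is a genuinely different route from the paper's. The paper argues locally by contradiction: if two heavy lines met at a point $p\notin A$, then the other $9$ lines through $p$ would each have to meet $A$ in at least one point, all distinct from one another and from the $10$ points of $A$ already sitting on the two heavy lines, forcing $|A|\geq 19$ --- impossible. Your proof instead is a global double count: the local relations $p_1+p_3+p_5=11$ and $2p_3+4p_5=14$ pin the average of $p_5$ at $2$ (via $\sum_{p\in A}p_5=30$), and then convexity of $\binom{x}{2}$ squeezes $\sum_{p\in A}\binom{p_5}{2}$ between $15$ and $\binom{6}{2}=15$, forcing every pair of heavy lines to meet inside $A$. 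Both are sound; the paper's is shorter and needs only Lemma 13, while yours buys more in one stroke: the equality case of Jensen gives $p_5=2$ for every $p\in A$, which subsumes the paper's subsequent Lemma 28 (no point on more than two $B_i$s) and the counting remark after it, rather than leaving those as separate steps. The only small caution is that your appeal to Jensen implicitly uses strict convexity of $x(x-1)/2$ to conclude that equality forces all $p_5$ equal; that is fine, but worth saying explicitly (or replacing by the elementary observation that $\sum_p\binom{p_5}{2}=30-b-c$ where $b,c$ count points with $p_5=1,2$, so the bound is really just $b+c\leq 15$).
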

\begin{proof} If there is a point $p\notin A$ where two $B_i$s intersect then $p$ has 9 other lines passing through it. Each of these 9 lines intersects $A$ at least once by Lemma 13, but each of the intersections should be distinct from each other and also the 10 points of the $B_i$s. This is not possible as $A$ only has 15 points. Therefore the $B_i$s must intersect in an element of $A$. \end{proof}

\begin{lemma} No point lies on more than two $B_i$s
\end{lemma}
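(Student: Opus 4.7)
The plan is to split on whether the candidate point $p$ lies in $A$, dispose of the off-$A$ case immediately from Lemma 26, and then handle the on-$A$ case by a point-counting bound followed by a structural contradiction.

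If $p\notin A$, the conclusion is instantaneous from Lemma 26: any two distinct $B_i$ already meet at a point of $A$, so $p$ cannot lie on two, let alone three, heavy lines.

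So suppose $p\in A$ lies on $k$ heavy lines. First I would establish the weak bound $k\le 3$ by counting points of $A$: each heavy line through $p$ contributes $4$ points of $A$ other than $p$, and these are disjoint across different heavy lines since any two lines through $p$ meet only at $p$. Together with $p$ this gives $4k+1\le 15$, so $k\le 3$.

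The substance of the proof is then ruling out $k=3$. Assume for contradiction that $B_1,B_2,B_3$ are three heavy lines through $p$. They jointly cover $p$ together with $12$ other points of $A$, leaving exactly two points $q_1,q_2\in A$ outside $B_1\cup B_2\cup B_3$. For each remaining heavy line $B_j$ with $j\in\{4,5,6\}$, note that $B_j$ does not contain $p$ (since $p$ is already on its quota of three heavy lines) and, by Lemma 26, meets each of $B_1,B_2,B_3$ in a point of $A$; these three intersection points are distinct because any two of $B_1,B_2,B_3$ share only the point $p\notin B_j$. Thus $B_j$ already has three of its five points of $A$ inside $B_1\cup B_2\cup B_3$, so its remaining two points of $A$ are forced to be exactly $\{q_1,q_2\}$. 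But then $q_1$ and $q_2$ both lie on each of $B_4,B_5,B_6$, contradicting the axiom that two distinct points determine a unique line. The main obstacle is spotting this pigeonhole on the residual pair $\{q_1,q_2\}$; once found, the contradiction is immediate and completes the proof.
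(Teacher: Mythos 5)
Your proof is correct and follows essentially the same route as the paper's: assume three heavy lines $B_1,B_2,B_3$ through a point $p\in A$, count that they cover all but two points of $A$ (which also rules out a fourth heavy line through $p$), and use Lemma 26 to force each remaining heavy line onto that residual pair. The only cosmetic difference is the final contradiction --- you invoke the unique-line axiom on $\{q_1,q_2\}$ lying on two distinct heavy lines, whereas the paper counts that $B_5$ could then meet $A$ in at most four points; these are equivalent, and your explicit treatment of the $p\notin A$ case is a harmless addition the paper leaves implicit.
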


\begin{proof} Let point $p\in A$ lie on lines $B_1$, $B_2$ and $B_3$. It is not possible for another heavy line to contain $p$ as there are only two remaining points of $A$. Let $B_4$ contain one point of each of $B_1$,$B_2$ and $B_3$ and both of the two remaining points of $A$. Then $B_5$ can only contain a maximum of one point from each of the previous lines and then can only have four points of intersection with $A$ which is a contradiction. \end{proof}

Now we know that each pair of $B_i$s intersects in a point of $A$, but no three intersect at a single point. There are $\binom{6}{2}=15$ pairs of $B_i$s so the distinct intersection points mean that each point of $A$ is the intersection of two heavy lines. Without loss of generality we can now write a matrix representing the points of $A$ on each $B_1,\ldots,B_6$.

$$\begin{matrix}
B_1: & 1 & 2 & 3 & 4 & 5 \\ 
B_2: & 1 & 6 & 7 & 8 & 9 \\ 
B_3: & 2 & 6 & 10 & 11 & 12 \\ 
B_4: & 3 & 7 & 10 & 13 & 14 \\ 
B_5: & 4 & 8 & 11 & 13 & 15 \\ 
B_6: & 5 & 9 & 12 & 14 & 15 \\ 
\end{matrix}$$

As the intersections of these lines is in $A$ all the remaining points on $B_i$ must be distinct. We let B=\{76,\ldots,111\} be the 36 other points on the $B_i$s.
Now we want to move on to looking at how the triple lines intersect $A$. Let $C_1,\ldots,C_{15}$ be the lines that intersect $A$ in 3 points. \\

\begin{lemma}Each point of $A$ lies on exactly three $C_i$s
\end{lemma}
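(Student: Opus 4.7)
The plan is to count in two ways the lines through a fixed point $p \in A$, sorting them by type (heavy, triple, single) and tracking how many of the remaining $14$ points of $A$ they cover.

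First I would record what is already known about $p$. By the preceding discussion, the $15$ pairs of heavy lines intersect in $15$ distinct points of $A$, and since $|A|=15$ this forces every point of $A$ to lie on exactly two heavy lines. In particular $p$ lies on exactly two of the $B_i$, which together account for $2 \cdot 4 = 8$ of the other points of $A$ (the four other points on each $B_i$, which are distinct by Lemma~27).

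Next let $p_3$ and $p_1$ denote the numbers of triple lines and single lines through $p$. Since $p$ lies on exactly $11$ lines of the plane, we get
\[
p_1 + p_3 + 2 = 11.
\]
On the other hand, every one of the $14$ points of $A \setminus \{p\}$ lies on a unique line through $p$ (by the projective plane axioms), and this line must intersect $A$ in at least two points, so it is either heavy or triple. The two heavy lines through $p$ cover $8$ of these points; each triple line through $p$ covers exactly $2$ more points of $A$; and single lines cover none. Hence
\[
8 + 2 p_3 = 14,
\]
which forces $p_3 = 3$ (and incidentally $p_1 = 6$).

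I do not expect any real obstacle here: the result falls out of one counting equation once the heavy-line structure on $A$ is in hand. The only subtlety is justifying that the two $B_i$ through $p$ contribute $8$ distinct further points of $A$, but this is immediate from Lemma~27 (no three $B_i$ share a point) combined with the fact that any two heavy lines meet in a point of $A$.
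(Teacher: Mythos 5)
Your proof is correct, but it takes a different route from the paper's. The paper first derives only an \emph{upper bound} locally: since the two heavy lines through $p$ use up $8$ of the other $14$ points of $A$, at most $3$ triple lines can pass through $p$. It then closes the gap globally by averaging: the $15$ triple lines contribute $15\cdot 3=45$ incidences with $A$, i.e.\ an average of $3$ per point, and a maximum of $3$ together with an average of $3$ forces equality everywhere. You instead make the local count exact: the $14$ points of $A\setminus\{p\}$ are partitioned among the lines through $p$, single lines contribute none, the two heavy lines contribute $8$, and each triple line contributes exactly $2$, so $8+2p_3=14$ gives $p_3=3$ directly at every point with no global averaging needed. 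Your version is arguably cleaner and also yields $p_1=6$ for free; the one small point to tighten is your citation for the distinctness of the $8$ heavy-line points --- that follows simply from the fact that two distinct lines meet in a unique point (here $p$), not really from the lemma stating that no point lies on three $B_i$s.
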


\begin{proof} Each $p\in A$ is on two heavy lines containing four other points of $A$. This only leaves six other points of $A$ for $C_i$s on $p$ to pass through so there must be a maximum of 3. In total the 15 $C_i$s intersect $A$ 45 times (3 each) so this is an average of 3 $C_i$s through each point of $A$. Therefore, as no point lies on more than 3, each point of $A$ is incident with precisely 3 $C_i$s. \end{proof}

Now we can form a matrix representing the incidence of the $C_i$s on $A$ based on the $B_i$ matrix we already have. Up to permutation we must have the following arrangement.

$$\begin{matrix}
C_1: & 1 & 10 & 15 \\ 
C_2: & 1 & 11 & 14 \\
C_3: & 1 & 12 & 13 \\
C_4: & 2 & 7 & 15 \\
C_5: & 2 & 8 & 14 \\ 
C_6: & 2 & 9 & 13 \\ 
C_7: & 3 & 6 & 15 \\
C_8: & 3 & 8 & 12 \\
C_9: & 3 & 9 & 11 \\
C_1: & 4 & 6 & 14 \\
C_1: & 4 & 7 & 12 \\
C_1: & 4 & 9 & 10 \\
C_1: & 5 & 6 & 13 \\
C_1: & 5 & 7 & 11 \\
C_1: & 5 & 8 & 10 \\
\end{matrix}$$

Every $C_i$ intersects with each $B_i$ in a point in $A$. Therefore there are 60 points remaining with which to finish the entries of the $C_i$ rows. These points will be represented by the set $C=\{16,\ldots,75\}$. \\

\begin{lemma} Each point in C is the intersection of exactly two $C_i$
\end{lemma}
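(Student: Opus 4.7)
The plan is to fix a point $p \in C$ and classify the eleven lines through $p$ by their intersection pattern with $A$. Every line of the plane is a heavy line, a triple line, or a single line by Lemma 25, so it suffices to show that no heavy line passes through $p$ and then count triple lines against single lines.

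First I would observe that $p$ lies on no $B_i$. By construction $B = \{76,\ldots,111\}$ is the set of points outside $A$ that appear on some heavy line, so the points on heavy lines are exactly $A \cup B$. Since $p \in C = \{16,\ldots,75\}$ is disjoint from $A \cup B$, no heavy line contains $p$. Hence, writing $t$ and $s$ for the numbers of triple lines and single lines through $p$, the fact that $p$ is on $11$ lines gives the first equation
\[
t + s = 11.
\]

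The second equation comes from a standard double-count on pairs $(l,q)$ with $l$ a line through $p$ and $q \in l \cap A$. For each $q \in A$ there is a unique line through $p$ and $q$, so the total number of such pairs is $|A| = 15$; on the other hand, each triple line through $p$ contributes $3$ to this count and each single line contributes $1$, yielding
\[
3t + s = 15.
\]
Subtracting the two equations gives $t = 2$, and hence $s = 9$. Translating back, $p$ lies on exactly two $C_i$, as required.

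There is no genuine obstacle here: once one checks that no heavy line passes through $p$ (which is immediate from the disjoint labeling of $A$, $B$, $C$), the statement is just the unique solution of a $2 \times 2$ linear system obtained by counting lines and counting incidences with $A$. The only thing to be careful about is that $t$ and $s$ really do exhaust all lines through $p$, which is guaranteed by Lemma 25.
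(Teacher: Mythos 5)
Your proof is correct, but it takes a different route from the paper's. The paper argues in two stages: first it shows locally that a point $p\in C$ lies on \emph{at most} two triple lines (if three $C_i$ passed through $p$ they would already meet $9$ points of $A$, and the remaining $8$ lines through $p$ would need $8$ further distinct points of $A$, exceeding $15$); it then upgrades this to \emph{exactly} two by a global pigeonhole count, namely that the $15$ triple lines carry $15\cdot 8=120$ incidences with $C$, which must be spread over the $60$ points of $C$ with at most two each. Your argument is purely local: after ruling out heavy lines through $p$ (correctly, since the union of the heavy lines is exactly $A\cup B$, disjoint from $C$), you solve
\[
t+s=11,\qquad 3t+s=15
\]
to get $t=2$ directly, with no need for the global count or for the fact that each $C_i$ contains exactly eight points of $C$. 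Your version is essentially the equality form of the paper's inequality argument and is arguably cleaner; the paper's global step has the side benefit of exhibiting the $120=2\times 60$ bookkeeping that is then used to fill in the explicit incidence table for the $C_i$ rows, but it is not logically necessary for the lemma itself. Both arguments rest on the same inputs (Lemma 13/25 and the structure of the heavy lines), so either is acceptable.
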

\begin{proof} If a point $p\in C$ is on three $C_i$s then these $C_i$s already intersect 9 points of $A$. The remaining 8 lines containing $p$ should each have at least one intersection with $A$, but there are only 6 points of $A$ left which is a contradiction. Therefore each point of $C$ can be contained in two $C_i$s at most.
Each $C_i$ contains 8 points from $C$. As there are 15 of these lines there are 120 points needed to fill in the incidence table. There are only 60 points in C, none of which can be in more than 2 rows so each must be used exactly twice to create the $C_i$ lines. \end{proof}

Now we can complete the $B_i$ and $C_i$ rows.

$$\begin{array}{*{12}c}
1 & 2 & 3 & 4 & 5 & & 76 & 77 & 78 & 79 & 80 & 81 \\
1 & 6 & 7 & 8 & 9 & & 82 & 83 & 84 & 85 & 86 & 87 \\
2 & 6 & 10 & 11 & 12 & & 88 & 89 & 90 & 91 & 92 & 93 \\
3 & 7 & 10 & 13 & 14 & & 94 & 95 & 96 & 97 & 98 & 99 \\
4 & 8 & 11 & 13 & 15 & & 100 & 101 & 102 & 103 & 104 & 105 \\
5 & 9 & 12 & 14 & 15 & & 106 & 107 & 108 & 109 & 110 & 111 \\
\hline
1 & 10 & 15 & & 16 & 17 & 18 & 19 & 20 & 21 & 22 & 23 \\ 
1 & 11 & 14 & & 24 & 25 & 26 & 27 & 28 & 29 & 30 & 31 \\
1 & 12 & 13 & & 32 & 33 & 34 & 35 & 36 & 37 & 38 & 39 \\
2 & 7 & 15 & & 24 & 32 & 40 & 41 & 42 & 43 & 44 & 45 \\
2 & 8 & 14 & & 16 & 33 & 46 & 47 & 48 & 49 & 50 & 51 \\ 
2 & 9 & 13 & & 17 & 25 & 52 & 53 & 54 & 55 & 56 & 57 \\ 
3 & 6 & 15 & & 26 & 34 & 46 & 52 & 58 & 59 & 60 & 61 \\
3 & 8 & 12 & & 18 & 27 & 40 & 53 & 62 & 63 & 64 & 65 \\
3 & 9 & 11 & & 19 & 35 & 41 & 47 & 66 & 67 & 68 & 69 \\
4 & 6 & 14 & & 20 & 36 & 42 & 54 & 62 & 66 & 70 & 71 \\
4 & 7 & 12 & & 21 & 28 & 48 & 55 & 58 & 67 & 72 & 73 \\
4 & 9 & 10 & & 29 & 37 & 43 & 49 & 59 & 63 & 74 & 75 \\
5 & 6 & 13 & & 22 & 30 & 44 & 50 & 64 & 68 & 72 & 74 \\
5 & 7 & 11 & & 23 & 38 & 51 & 56 & 60 & 65 & 70 & 75 \\
5 & 8 & 10 & & 31 & 39 & 45 & 57 & 61 & 69 & 71 & 73 \\
\end{array}$$

\begin{lemma}The 90 single lines of the projective plane each contain four points of B and six points of C.
\end{lemma}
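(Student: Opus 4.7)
The plan is a pure counting argument based on intersecting a single line $\ell$ with the six $B_i$ and the fifteen $C_j$ already enumerated above. Let $a$ be the unique point of $A$ on $\ell$, and set $b = |\ell \cap B|$ and $c = |\ell \cap C|$. Since $|A| + |B| + |C| = 15 + 36 + 60 = 111$, the sets $A$, $B$, $C$ partition the points of the plane, so the ten points of $\ell$ other than $a$ split as $b + c = 10$. The goal therefore reduces to proving $b \ge 4$ and $c \ge 6$; equality then follows automatically.

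First I would handle the heavy lines. Each $B_i$ meets $\ell$ in exactly one point by the projective-plane axioms. From the display of the $B_i$ on the points of $A$, every point of $A$ lies on exactly two heavy lines, so two of the $B_i$ meet $\ell$ at $a$. Since $\ell \cap A = \{a\}$, the remaining four $B_i$ must meet $\ell$ at points outside $A$; because the $B_i$ contain only points of $A \cup B$, those four intersection points lie in $B$. A quick count ($6 \cdot 6 = 36 = |B|$) shows each point of $B$ lies on a unique heavy line, so these four intersection points are distinct, and $b \ge 4$.

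Next I would run the same argument for the triple lines. Each $C_j$ meets $\ell$ in one point; by Lemma 28, exactly three of them pass through $a$, so the other twelve meet $\ell$ at points outside $A$. Triple lines contain only points of $A \cup C$ (since $B \cap C = \emptyset$ by the cardinality argument), so these twelve intersections lie in $C$. By Lemma 29 each point of $C$ lies on exactly two triple lines, hence each point of $\ell \cap C$ can absorb at most two of the twelve intersections, giving $c \ge \lceil 12/2 \rceil = 6$. Combined with $b + c = 10$ and $b \ge 4$, this forces $b = 4$ and $c = 6$.

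The argument has no real obstacle once the bookkeeping is in place; the only step that needs a moment of care is verifying that the ``extra'' four heavy-line intersections and twelve triple-line intersections genuinely land in $B$ and $C$ respectively, which is where the disjointness of $A$, $B$, $C$ (and hence the fact that heavy lines avoid $C$ and triple lines avoid $B$) is used.
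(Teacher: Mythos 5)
Your argument is correct and rests on the same core observation as the paper's proof: the single line meets each of the four heavy lines not through its point $a$ of $A$ in a point necessarily lying in $B$, and these four points are distinct because each point of $B$ lies on a unique $B_i$. The only difference is cosmetic: the paper concludes $|\ell\cap C|=6$ directly by subtraction from the 11 points of $\ell$, whereas you obtain it by pairing the lower bound $b\ge 4$ with an independent lower bound $c\ge 6$ from the triple lines (via Lemmas 28 and 29) and squeezing against $b+c=10$ --- a slightly more redundant but equally valid route.
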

\begin{proof} Let $l$ be a single line. Then $l$ intersects $A$ at one point, which will be the intersection of two $B_i$s. As $l$ contains no other points of $A$ it must contain four points of $B$ in order to intersect with the other four $B_i$s which have distinct points of $B$. The total number of points on $l$ is 11 so the remaining six points are in $C$. \end{proof}

\subsection{Group Operations on A}
Next we want to examine the structure of our current arrangement to reduce the computation we need to carry out. Let points of $A$ correspond to transpositions of the symbols $t_1,\ldots,t_6$ decided by the $B_i$ lines each point of $A$ lies on:
$$\begin{array}{cccccccc}
1&2&3&4&5&6&7&8 \\
(t_1t_2)&(t_1t_3)&(t_1t_4)&(t_1t_5)&(t_1t_6)&(t_2t_3)&(t_2t_4)&(t_2t_5)
\end{array}$$
$$\begin{array}{ccccccc}
9&10&11&12&13&14&15 \\
(t_2t_6)&(t_3t_4)&(t_3t_5)&(t_3t_6)&(t_4t_5)&(t_4t_6)&(t_5t_6)
\end{array}$$
Then the operations $\tau_i$ representing conjugation by $(t_i,t_{i+1})$ for i=1,\ldots,5 act on $A$ and generate a group of operations, $G$. Furthermore, the set $A$ is invariant under $G$ as shown by the following table of $\tau_i$s operating on $A$. 

$$\begin{array}{c|ccccccccccccccc}
 & 1 & 2 & 3 & 4 & 5 & 6 & 7 & 8 & 9 & 10 & 11 & 12 & 13 & 14 & 15 \\
 \hline
 \tau_1 & 1 & 6 & 7 & 8 & 9 & 2 & 3 & 4 & 5 & 10 & 11 & 12 & 13 & 14 & 15 \\
 \tau_2 & 2 & 1 & 3 & 4 & 5 & 6 & 10 & 11 & 12 & 7 & 8 & 9 & 13 & 14 & 15 \\
 \tau_3 & 1 & 3 & 2 & 4 & 5 & 7 & 6 & 8 & 9 & 10 & 13 & 14 & 11 & 12 & 15 \\
 \tau_4 & 1 & 2 & 4 & 3 & 5 & 6 & 8 & 7 & 9 & 11 & 10 & 12 & 13 & 15 & 14 \\
 \tau_5 & 1 & 2 & 3 & 5 & 4 & 6 & 7 & 9 & 8 & 10 & 12 & 11 & 14 & 13 & 15
 \end{array}$$
 
The set of transpositions representing the five points of $A$ on $B_1$ is $$\{(t_1t_2),(t_1t_3),(t_1t_4),(t_1t_5),(t_1t_6)\}$$ and we use it to represent the line $B_1$. The other $B_i$s are also represented as the set of 5 points from $A$ they contain and together the set of 6 $B_i$s is invariant under the operations of $G$.  
The $C_i$ lines are also related by conjugation, but are identified with a set of three transpositions. The line $C_1$ can be viewed as $$\{(t_1t_2), (t_3t_4), (t_5t_6)\}.$$ 
Similarly the set of $C_i$s is invariant under the operations of $G$ and this gives us a way of extending $G$ to operate on the points of $C$. Since every point of $C$ is the intersection of two $C_i$s we consider how $G$ acts on those two lines and find the new intersection point so as to maintain incidence under $G$. \\

\begin{example} Consider the point $16\in C$. This point lies on $C_1$ and $C_5$ which are represented by $\{(t_1t_2), (t_3t_4), (t_5t_6)\}$ and $\{(t_1t_3), (t_2t_5), (t_4t_6)\}$. If we apply $\tau_4$ as an operation on 16 we need to look at how $\tau_4$ affects $C_1$ and $C_5$. 
\begin{align*} \tau_4(C_1)&=\{(t_4t_5)(t_1t_2)(t_4t_5), (t_4t_5)(t_3t_4)(t_4t_5), (t_4t_5)(t_5t_6)(t_4t_5)\} \\
&= \{(t_1t_2), (t_3t_5), (t_4t_6)\} \\
&= \{1,11,14\} \\
&= C_2 \\
\end{align*}
\begin{align*}
\tau_4(C_5)&=\{(t_4t_5)(t_1t_3)(t_4t_5), (t_4t_5)(t_2t_5)(t_4t_5), (t_4t_5)(t_4t_6)(t_4t_5)\} \\
&= \{(t_1t_3), (t_2t_4), (t_5t_6)\} \\
&= \{2,7,15\} \\
&= C_4
\end{align*}
Now we find the point of intersection of $C_2$ and $C_4$ which is $24$, therefore $\tau_4(16)=24$. The table below shows the behaviour of each element of $C$ under each $\tau_i$.
\end{example}

We can reduce computation in a later section by noting that the only $\tau_i$ which does not fix the point 1 is $\tau_2$. Therefore, let $G_1$ be the subgroup generated by $\tau_1$, $\tau_3$, $\tau_4$ and $\tau_5$ under which 1 is unchanged.

$$\begin{array}{c|ccccccccccccccc}
 & 16 & 17 & 18 & 19 & 20 & 21 & 22 & 23 & 24 & 25 & 26 & 27 & 28 & 29 & 30 \\
 \hline
 \tau_1 & 20 & 22 & 21 & 23 & 16 & 18 & 17 & 19 & 26 & 30 & 24 & 28 & 27 & 31 & 25 \\
 \tau_2 & 24 & 32 & 41 & 40 & 42 & 43 & 44 & 45 & 16 & 33 & 46 & 47 & 49 & 48 & 50 \\
 \tau_3 & 18 & 19 & 16 & 17 & 21 & 20 & 23 & 22 & 34 & 35 & 32 & 33 & 36 & 37 & 38 \\
 \tau_4 & 24 & 25 & 28 & 29 & 26 & 27 & 30 & 31 & 16 & 17 & 20 & 21 & 18 & 19 & 22 \\
 \tau_5 & 17 & 16 & 19 & 18 & 22 & 23 & 20 & 21 & 32 & 33 & 34 & 35 & 38 & 39 & 36 \\
 \end{array}$$
 $$\begin{array}{c|ccccccccccccccc}
 & 31 & 32 & 33 & 34 & 35 & 36 & 37 & 38 & 39 & 40 & 41 & 42 & 43 & 44 & 45 \\
 \hline
 \tau_1 & 29 & 34 & 36 & 32 & 38 & 33 & 39 & 35 & 37 & 58 & 60 & 46 & 61 & 52 & 59 \\
 \tau_2 & 51 & 17 & 25 & 52 & 53 & 54 & 55 & 57 & 56 & 19 & 18 & 20 & 21 & 22 & 23 \\
 \tau_3 & 39 & 26 & 27 & 24 & 25 & 28 & 29 & 30 & 31 & 46 & 52 & 58 & 59 & 60 & 61 \\
 \tau_4 & 23 & 33 & 32 & 36 & 37 & 34 & 35 & 39 & 38 & 48 & 49 & 46 & 47 & 50 & 51 \\
 \tau_5 & 37 & 24 & 25 & 26 & 27 & 30 & 31 & 28 & 29 & 41 & 40 & 44 & 45 & 42 & 43 \\ 
 \end{array}$$
 $$\begin{array}{c|ccccccccccccccc}
 & 46 & 47 & 48 & 49 & 50 & 51 & 52 & 53 & 54 & 55 & 56 & 57 & 58 & 59 & 60 \\
 \hline
 \tau_1 & 42 & 70 & 62 & 71 & 54 & 66 & 44 & 72 & 50 & 64 & 68 & 74 & 40 & 45 & 41 \\
 \tau_2 & 26 & 27 & 29 & 28 & 30 & 31 & 34 & 35 & 36 & 37 & 39 & 38 & 59 & 58 & 61 \\
 \tau_3 & 40 & 53 & 62 & 63 & 65 & 64 & 41 & 47 & 67 & 66 & 68 & 69 & 42 & 43 & 44 \\
 \tau_4 & 42 & 43 & 40 & 41 & 44 & 45 & 54 & 55 & 52 & 53 & 57 & 56 & 62 & 66 & 71 \\
 \tau_5 & 52 & 53 & 56 & 57 & 54 & 55 & 46 & 47 & 50 & 51 & 48 & 49 & 60 & 61 & 58 \\ 
 \end{array}$$
 $$\begin{array}{c|ccccccccccccccc}
 & 61 & 62 & 63 & 64 & 65 & 66 & 67 & 68 & 69 & 70 & 71 & 72 & 73 & 74 & 75 \\
 \hline
 \tau_1 & 43 & 48 & 73 & 55 & 67 & 51 & 65 & 56 & 75 & 47 & 49 & 53 & 63 & 57 & 69 \\
 \tau_2 & 60 & 66 & 67 & 68 & 69 & 62 & 63 & 64 & 65 & 71 & 70 & 74 & 75 & 72 & 73 \\
 \tau_3 & 45 & 48 & 49 & 51 & 50 & 55 & 54 & 56 & 57 & 72 & 73 & 70 & 71 & 75 & 74 \\
 \tau_4 & 70 & 58 & 67 & 72 & 73 & 59 & 63 & 74 & 75 & 61 & 60 & 64 & 65 & 68 & 69 \\
 \tau_5 & 59 & 68 & 69 & 66 & 67 & 64 & 65 & 62 & 63 & 72 & 74 & 70 & 75 & 71 & 73 \\ 
 \end{array}$$

\subsection{The 90 Single Lines}

Here we employ the help of a computer in trying to complete the 90 rows corresponding to the single lines of the projective plane. We start by considering the remaining lines passing through the point 1. There are already 2 heavy lines and 3 triples lines that contain 1 so we need to find the 6 single lines that complete the set of lines incident with 1. Each of these lines contains 6 points of $C$ by Lemma 31. The 6 points must be points that are not already on a line containing 1 so they must be selected from the numbers that have not been crossed out in the following table. Additionally these 6 points must provide exactly one intersection with each $C_i$ not passing through 1. This means each row other than the first three must have exactly one of the 6 points on it. One collection of 6 numbers that could lie on a single line has been given in the last column with each number shown beside both rows in which it occurs.

$$\begin{array}{*{12}c|c}
\tikzmark{topA}1 & \tikzmark{topB}{10} & \tikzmark{topC}{15} & & \tikzmark{topD}{16} & \tikzmark{topE}{17} & 18 & 19 & 20 & 21 & 22 & \tikzmark{bottomF}{23} \\ 
\tikzmark{topG}1 & 11 & 14 & & 24 & 25 & 26 & 27 & 28 & 29 & 30 & \tikzmark{bottomG}{31} \\
\tikzmark{topH}1 & 12 & 13 & & 32 & 33 & 34 & 35 & 36 & 37 & 38 & \tikzmark{bottomH}{39} \\
2 & 7 & 15 & & 24 & 32 & 40 & 41 & 42 & 43 & 44 & 45 & 41\\
2 & 8 & 14 & & 16 & 33 & 46 & 47 & 48 & 49 & 50 & 51 & 46 \\ 
2 & 9 & 13 & & 17 & 25 & 52 & 53 & 54 & 55 & 56 & 57 & 53\\ 
3 & 6 & 15 & & 26 & 34 & 46 & 52 & 58 & 59 & 60 & 61 & 46\\
3 & 8 & 12 & & 18 & 27 & 40 & 53 & 62 & 63 & 64 & 65 & 53 \\
3 & 9 & 11 & & 19 & 35 & 41 & 47 & 66 & 67 & 68 & 69 & 41\\
4 & 6 & 14 & & 20 & 36 & 42 & 54 & 62 & 66 & 70 & 71 & 71 \\
4 & 7 & 12 & & 21 & 28 & 48 & 55 & 58 & 67 & 72 & 73 & 72 \\
4 & 9 & 10 & & 29 & 37 & 43 & 49 & 59 & 63 & 74 & 75 & 75 \\
5 & 6 & 13 & & 22 & 30 & 44 & 50 & 64 & 68 & 72 & 74 & 72 \\
5 & 7 & 11 & & 23 & 38 & 51 & 56 & 60 & 65 & 70 & 75 & 75 \\
\tikzmark{bottomA}5 & \tikzmark{bottomB}8 & \tikzmark{bottomC}{10} & & \tikzmark{bottomD}{31} & \tikzmark{bottomE}{39} & 45 & 57 & 61 & 69 & 71 & 73 & 71 \\
\end{array}$$
\DrawLine[red, thick, opacity=0.5]{topA}{bottomA}
\DrawLine[red, thick, opacity=0.5]{topB}{bottomB}
\DrawLine[red, thick, opacity=0.5]{topC}{bottomC}
\DrawLine[red, thick, opacity=0.5]{topD}{bottomD}
\DrawLine[red, thick, opacity=0.5]{topE}{bottomE}
\DrawHLine[red, thick, opacity=0.5]{topA}{bottomF}
\DrawHLine[red, thick, opacity=0.5]{topG}{bottomG}
\DrawHLine[red, thick, opacity=0.5]{topH}{bottomH}

Each set of 6 points fulfilling the above conditions is called a 6-set. There are 344 6-sets for the point 1 and we will let $L_1$ represent the set of all 344 of them. We then generalise this to let $L_i$ denote the set of 6-sets which are being considered as points of the single lines through point $i\in A$. It is possible to generate all the $L_i$s from $L_1$ by using elements of $G$. An element of $G$ that maps 1 to $i\in A$ maps a 6-set of $L_1$s to a 6-set of $L_i$. \\

\begin{example} We can map our sample 6-set for the point 1 to a 6-set for another point by using an element of $G$. We already have a table for how the generators, $\tau_i$ act on points of $C$ so using a transformation of $\tau_2$ will be the easiest to demonstrate. As $\tau_2(1)=2$ we will get a 6-set for $L_2$.
$$\tau_2\{41,46,53,71,72,75\}=\{18,26,35,70,74,73\}$$

\end{example}

We want to choose 6 6-sets from $L_1$ to be elements of $C$ on the lines through 1. As the lines intersect at 1 we need to find all 6 elements subsets $L_1$ where the 6-sets are distinct. Each of these subsets is called a $K_6$ which arises from viewing the elements of $L_1$ as vertices of a graph. Two elements are connected by an edge if the 6-sets corresponding to the vertices do not have any points in common. This means that a set of 6  6-sets looks like a fully connected graph on 6 vertices, a $K_6$, if the 6-sets are all mutually distinct.

We calculated that there are 42,496 possible $K_6$s for the point 1. In order to reduce the number we need to further investigate we consider the permutation $G_1$ and how it acts on these $K_6$s. If a projective plane does exist then it must have these $K_6$s through each point of $A$ that make up the 90 single lines of the plane. Applying a permutation from $G_1$ will map $K_6$s to $K_6$s, however they may now be $K_6$s for a different point of $A$ depending on the action of that element of $G_1$ on the points from $A$ seen on page 27. We picked $G_1$ to preserve the number 1 so we know that the $K_6$ on 1 will remain on the point 1. We may consider these 42,496 $K_6$s through 1 as being separated into 1021 different orbits under $G_1$ and by selecting any one $K_6$ from each orbit we only need to investigate 1021 cases. If any of the $K_6$s through 1 from a particular orbit can be completed into a plane then it means there is a consistent set of 15 $K_6$s through each point of $A$ including this $K_6$. This can be transformed to the representative case from that orbit by an operation of $G_1$ which will preserve the $K_6$s such that there is still a $K_6$ going through each point of $A$. Thus it does not matter which selection we make from the orbit, if we can show that any point of $A$ does not have a viable $K_6$ passing through it then no starting $K_6$ from the orbit is part of a projective plane of order 10.

Each of the 1021 $K_6$s for the lines passing through 1 is considered individually now, so let $U$ be the $K_6$ selected. We have chosen the 6 points from $C$ on each line, but the six lines passing though 1 each still need to contain 4 points from $B$. As $B_1$ and $B_2$ contain the point 1 there are only 24 points of B remaining to be part of the the 6 single lines through 1 and each one of these 24 points must be contained in exactly one line. Each single line must contain one point from each of $B_3$-$B_6$ so as not to intersect more than once with any of the $B_i$ lines and the points from these lines are currently all equivalent so without loss of generality we may assume the arrangement below.

$$\begin{array}{ccccc}
1 & 88 & 94 & 100 & 106 \\
1 & 89 & 95 & 101 & 107 \\
1 & 90 & 96 & 102 & 108 \\
1 & 91 & 97 & 103 & 109 \\
1 & 92 & 98 & 104 & 110 \\
1 & 93 & 99 & 105 & 111
\end{array}$$

Next for any potential set of single lines passing through 1 we can look at the set of 6-sets for another point in $A$ in order to try and find the 6 single lines passing through that point. To reduce calculation times as much as possible we choose the next point to be 10 as points 2 to 9 are contained in a heavy line with 1 and thus have less overlap in the matching vector discussed later. For $U$, a $K_6$ chosen through 1, the viable 6-sets through 10 is reduced from the 344 candidates of $L_{10}$ as no pair of points contained in one of the six lines of $U$ are not permitted to be contained in another single line through 10 so some previously viable 6-sets must be discarded. \\
\\
Though we don't know much about the points from $B$ and their arrangement on $U$ we can discern information from which points of $B$ are in $U$. As 10 is contained in the heavy lines $B_3$ and $B_4$ each of the 6 single lines through 10 must contain one point of $B$ from each of $B_1$,$B_2$,$B_5$ and $B_6$. For any one of the 6 lines through 10 the two points of intersection with $B_5$ and $B_6$ will each be a point of intersection with a distinct line of $U$ as all the points of $B$ on those lines were used in the array above. They must be distinct lines or a line passing through 10 and a line passing through 1 would have two intersection points. As each of the 6 lines through 10 intersects 2 lines of $U$ in a point of $B$ they must intersect the remaining 4 lines of $U$ in $C$. Therefore in order to still be a potential single line a 6-set from $L_{10}$ should intersect four 6-sets of $U$ in $C$. \\

\begin{example}Here is one possible arrangement of the points in $B$ for the 6 lines through 10. It can be observed that each line intersects two of the lines through 1 shown above.
$$\begin{array}{ccccc}
1 & 76 & 82 & 100 & 107 \\
1 & 77 & 83 & 101 & 108 \\
1 & 78 & 84 & 102 & 109 \\
1 & 79 & 85 & 103 & 110 \\
1 & 80 & 86 & 104 & 111 \\
1 & 81 & 87 & 105 & 106
\end{array}$$
\end{example}

In order to ensure the $K_6$ from $L_{10}$ intersects $U$ correctly we introduce the concept of a matching vector. We let $m_i$ be the number of intersections a 6-set from $L_{10}$ has with the i-th row of $U$, then the matching vector is given by
$$\textbf{m}=m_1,m_2,\ldots,m_6.$$
For each $U$ the 6-sets that do not have a matching vector containing 4 ones and 2 zeros are discarded from $L_{10}$. All possible $K_6$s that can be generated from the remaining 6-sets of $L_{10}$ are found and we call each possible $K_6$ $V$ and record it as a pair with $U$. At this point the number of $\{U,V\}$ pairs varies depending on the set of representations chosen as options for $U$, but we had 16,205 in our calculation.  \\

Now that we have the lines through 1 and 10 we want to find the 6 lines that intersect $A$  at point 15 for maximum overlap with the two sets of lines we already have. Any single line through 15 intersects with two lines through 1 and two lines through 10 in points of $B$. This means for each choice of $U$ and $V$ we can use two matching vectors; one checking the intersection of 6-sets with $U$ and one checking intersections with $V$. If either matching vector does not contain 4 ones and 2 zeros then the 6-set is removed from $L_{15}$. As before all sets of 6 disjoint 6-sets are found from the refined $L_{15}$ and one is chosen as $W$. At this point we had 226 possibilities for $\{U,V,W\}$ and 96 different $U$s could be extended to find a $W$.

For each selection of $U$, $V$ and $W$ a $K_6$, $X$, from $L_{11}$ needs to be found. Here we have to modify our matching vector a little. Since the point 11 is on $C_2$ with 1 it still needs to have 4 ones and 2 zeros with respect to the intersections with $U$, but the 6-sets should intersect each 6-set of $V$ and $W$ 3 times. The lines through 11 intersect three of the lines in $V$ in the points contained in $B_1$, $B_2$ and $B_6$ and intersect three of the lines in $W$ in points contained in $B_1$, $B_2$ and $B_4$ leaving 3 intersections in $C$ for lines passing through 11 and $V$ and also their intersection with $W$. After eliminating unsuitable $L_{11}$ elements we create the possibilities for $K_6$ as usual and find 17 different collections of $U$, $V$, $W$ and $X$. None of the 17 can be extended to include $Y$, a $K_6$ from $L_{14}$ using the same method. Therefore it is not possible to construct the 90 single lines of the projective plane so our assumption that there is a vector of weight 15 in the projective plane of order 10 must be incorrect. 

\subsection{Computer Verification}
The results presented so far are those found by our computer search. The results given by MacWilliams, Sloane and Thompson in \cite{B15C} are slightly different beyond the 1021 representative $K_6$s as expected. This is because the existence of an entire plane containing any of the $K_6$s from an orbit is equivalent to the entire orbit being viable, but since we are looking at $K_6$s through point 10 first for our representation it might be a different set of $K_6$s under $G_1$. This makes confirming their search more difficult, but our search also came to the conclusion that no $K_6$, $Y$, could be found to extend the set of $U$, $V$, $W$ and $X$. We found 17 $\{U,V,W,X\}$s which is comparable with the result of 25 found by MacWilliams, Sloane and Thompson. The workbook for the search can be found at https://goo.gl/DUrCFJ 
\\

\renewcommand{\abstractname}{Acknowledgements}
\begin{abstract}
 I would like to thank Dr. Dillon Mayhew for his supervision and proofreading of this report and Dr. Ken Pledger for his useful feedback and proofreading.
\end{abstract}

\newpage

\end{document}